\definecolor{LinkColor}{rgb}{0,0,0} %black
\definecolor{Schweinchenrosa}{HTML}{FFAAAA}
\newtheorem{maintheorem}{Theorem}
\crefname{maintheorem}{Theorem}{Theorems}
\newtheorem{maincorollary}[maintheorem]{Corollary}
\crefname{maincorollary}{Corollary}{Corollaries}
\newtheorem{theorem}{Theorem}%[section]
\crefname{theorem}{Theorem}{Theorems}
\newaliascnt{corollary}{theorem}
\newtheorem{corollary}[corollary]{Corollary}
\crefname{corollary}{Corollary}{Corollaries}
\newaliascnt{lemma}{theorem}
\newtheorem{lemma}[lemma]{Lemma}
\crefname{lemma}{Lemma}{Lemmata}
\newaliascnt{proposition}{theorem}
\newtheorem{proposition}[proposition]{Proposition}
\crefname{proposition}{Proposition}{Propositions}
\newtheorem*{zc*}{Zassenhaus Conjecture (ZC)}
\newtheorem*{pq*}{Prime Graph Question (PQ)}
\newtheorem*{theorem*}{Theorem}
\theoremstyle{definition}
\newaliascnt{definition}{theorem}
\newtheorem{definition}[definition]{Definition}
\crefname{definition}{Definition}{Definitions}
\theoremstyle{remark}
\newaliascnt{example}{theorem}
\crefname{example}{Example}{Examples}
\newaliascnt{remark}{theorem}
\newtheorem{remark}[remark]{Remark}
\crefname{remark}{Remark}{Remarks}
\newaliascnt{notation}{theorem}
\crefname{notation}{Notation}{Notations}
\newcommand{\Sz}{\operatorname{Sz}}
\newcommand{\PSL}{\operatorname{PSL}}
\newcommand{\PGL}{\operatorname{PGL}}
\newcommand{\PSU}{\operatorname{PSU}}
\newcommand{\PSp}{\operatorname{PSp}}
\newcommand{\Aut}{\operatorname{Aut}}
\newcommand{\V}{\textup{V}}
\newcommand{\ZZ}{\mathbb{Z}}
\newcommand{\cc}[1]{\ensuremath{{\texttt{#1}}}}
\newlength{\heightofhw}
\newcommand{\eigbox}[2]{\ensuremath{\textstyle{#1} \times \boxed{\rule{0cm}{\heightofhw} #2}}}
\title[Prime Graph Question for Integral Group Rings of 4-primary groups II]{On the Prime Graph Question for Integral Group Rings\\ of 4-primary groups II}
\author{Andreas B\"achle}
\address{Vakgroep Wiskunde, Vrije Universiteit Brussel, Pleinlaan 2, 1050 Brussels, Belgium}
\email{\href{mailto:abachle@vub.ac.be}{abachle@vub.ac.be}}
\author{Leo Margolis}
\address{Departamento de matem\'aticas, Facultad de matem\'aticas, Universidad de Murcia, 30100 Murcia, Spain}
\email{\href{mailto:leo.margolis@um.es}{leo.margolis@um.es}}
\thanks{The first author is supported by the Research Foundation Flanders (FWO - Vlaanderen). The second by a Marie-Curie Individual Fellowship from European Commission H2020 program.}
\subjclass[2010] {16S34, 16U60, 20C05} 
\keywords{integral group ring, torsion units, Zassnehaus Conjecture, Prime Graph Question, Littlewood-Richardson coefficient, almost simple groups}
\begin{document}

\begin{abstract}
 In this article the study of the Prime Graph Question for the integral group ring of almost simple groups which have an order divisible by exactly $4$ different primes is continued. We provide more details on the recently developed ``lattice method'' which involves the calculation of Littlewood-Richardson coefficients. We apply the method obtaining results complementary to those previously obtained using the HeLP-method. In particular the ``lattice method'' is applied to infinite series of groups for the first time. We also prove the Zassenhaus Conjecture for four more simple groups. Furthermore we show that the Prime Graph Question has a positive answer around the vertex $3$ provided the Sylow $3$-subgroup is of order $3$.
\end{abstract}

\maketitle

\section{Introduction and Main Result}

Since the PhD thesis of G. Higman \cite{HigmanThesis} the units of the integral group ring $\mathbb{Z}G$ of a finite group $G$ have been studied intensively by many authors. See also \cite{Sandling} for details on Higman's thesis. Though wide reaching results have been obtained for some special classes of groups, e.g. Weiss' celebrated theorems for nilpotent groups \cite{Weiss91}, in general many questions remain open. One of the main open problems concerning the torsion units of $\mathbb{Z}G$ is the so called\\

\textbf{Zassenhaus Conjecture (ZC).}\ For every unit $u \in \mathbb{Z}G$ of finite order there exists a unit $x$ in the rational group algebra $\mathbb{Q}G$ such that $x^{-1} u x = \pm g$ for some $g \in G$.\\

The Zassenhaus Conjecture is known for some series of solvable groups, e.g. for nilpotent groups \cite{Weiss91}, groups with a normal Sylow $p$-subgroup complemented by an abelian group \cite{HertweckColloq} or cyclic-by-abelian groups \cite{CyclicByAbelian}. For non-solvable groups however very little is known. In particular for simple groups the conjecture is only known for a few groups, all of them isomorphic to some small $\PSL(2,q)$, see the references in the proof of Theorem \ref{ZCPSL}.

A unit $u \in \mathbb{Z}G$ is called normalized, if the coefficient sum of $u$ equals $1$ and $\mathrm{V}(\mathbb{Z}G)$ denotes the set of all normalized units of $\mathbb{Z}G$. Clearly the units of $\mathbb{Z}G$ are $\pm \mathrm{V}(\mathbb{Z}G)$ and it thus suffices to study normalized units. W. Kimmerle asked the following question which can be seen as a first approximation for the Zassenhaus Conjecture \cite{KimmiPQ}.\\

\textbf{Prime Graph Question (PQ).}\ If $\V(\ZZ G)$ contains a unit of order $pq$, does $G$ have an element of order $pq$, where $p$ and $q$ are different primes? \\

This is equivalent to say that the prime graphs of $G$ and $\mathrm{V}(\mathbb{Z}G)$ coincide. The Prime Graph Question seems especially approachable since W. Kimmerle and A. Konovalov proved a strong reduction: The Prime Graph Question has a positive answer for some group $G$ if and only if it has a positive answer for all almost simple images of $G$ \cite[Theorem 2.1]{KonovalovKimmiComp}. A group is called almost simple if it is sandwiched between a non-abelian simple group $S$ and the automorphism group of $S$. In this case $S$ is called the socle of $G$. 

In the first part of this investigation we showed exactly how much can be achieved in the study of the Prime Graph Question for groups whose order has exactly four different prime divisors using only the well known HeLP method. Such groups are called $4$-primary. In this paper we continue to study the Prime Graph Question for $4$-primary groups using a method recently developed by the authors \cite{Gitter}. In total there are $37$ specific and $3$ series of potentially infinite simple $4$-primary groups giving rise to $123$ specific and $5$ possibly infinite series of almost simple groups. For $12$ specific groups and $1$ series a positive answer to the Prime Graph Question is available in the literature (see references in \cite{HeLPTeil}). By the results of this paper and its predecessor the Prime Graph Question has a positive answer for $109$ specific and $3$ series of groups.

We also establish the Zassenhaus Conjecture for four more simple groups increasing the number of non-abelian simple groups for which the Zassenhaus Conjecture is known to thirteen.
Moreover we show that the prime graphs of $G$ and $\mathrm{V}(\mathbb{Z}G)$ coincide around the vertex $3$ assuming the Sylow $3$-subgroup of $G$ has order $3$. 
The main results are the following.

\begin{maintheorem}\label{MainTheorem}
Let $G$ be an almost simple $4$-primary group. The following table shows, whether an answer to the Prime Graph Question for $G$ is known. A simple group $S$ appearing in bold letters indicates all almost simple groups having $S$ as a socle. If $G$ appears in the left column of the table the Prime Graph Question has an affirmative answer for $G$. If $G$ is in the right column it is not known whether $\V(\ZZ G)$ contains elements of order $pq$ for the values indicated in italic in parentheses. 

%{\footnotesize
\begin{longtable}{|p{.54\textwidth} | p{.4\textwidth}|} \hline
{\bf (PQ)} has affirmative answer & {\bf (PQ)} not known \\ \hline
$\boldsymbol{\PSL(2,p)}$, $p$ a prime & $\boldsymbol{\PSL(2,3^f)}$, $f \geq 7$ \textit{(6)} \\
$\boldsymbol{\PSL(2,2^f)}$, $f \geq 4$ &  \\ \hline 
$\boldsymbol{A_7}$, $\boldsymbol{A_8}$, $\boldsymbol{A_9}$, $\boldsymbol{A_{10}}$ & {} \\
$\boldsymbol{\PSL(2,25)}$, $\boldsymbol{\PSL(2,49)}$, $\boldsymbol{\PSL(3, 4)}$,  & $\boldsymbol{\PSL(3,17)}$ \textit{(51)} \\
$\boldsymbol{\PSL(3,5)}$, $\boldsymbol{\PSL(4,3)}$ & \\
$\boldsymbol{\PSU(3,4)}$, $\boldsymbol{\PSU(3,5)}$, $\boldsymbol{\PSU(3,7)}$, $\boldsymbol{\PSU(3,8)}$ & \\
$\boldsymbol{\PSU(3,9)}$, $\boldsymbol{\PSU(4,3)}$, $\boldsymbol{\PSU(4,4)}$, $\boldsymbol{\PSU(4,5)}$ & {} \\
$\boldsymbol{\PSU(5,2)}$ & { } \\
$\boldsymbol{\PSp(4,4)}$, $\boldsymbol{\PSp(4,5)}$, $\boldsymbol{\PSp(4,9)}$, $\boldsymbol{\PSp(6,2)}$ & $\boldsymbol{\PSp(4,7)}$ \textit{(35)} \\
$\boldsymbol{\operatorname{P}\Omega_+(8,2)}$, $\boldsymbol{\Sz(8)}$, $\boldsymbol{G_2(3)}$, $\boldsymbol{{}^3D_4(2)}$  & {} \\
$\boldsymbol{{}^2F_4(2)'}$, $\boldsymbol{M_{11}}$, $\boldsymbol{M_{12}}$, $\boldsymbol{J_2}$ & {} \\

$\PSL(2,27).3$, $\PSL(2,27).6$ &  $\PSL(2,27)$ \textit{(6)}, $\PSL(2,27).2$ \textit{(6)} \\
$\PSL(2,81).2a$, $\PSL(2,81).4a$, $\PSL(2,81).2^2$ &  $\PSL(2,81)$ \textit{(6)}, $\PSL(2,81).2b$ \textit{(6)} \\
$\PSL(2,81).4b$, $\PSL(2,81).(2\times 4)$ &  $\PSL(2,81).2c$ \textit{(6)},  \\
{} & $\PSL(2,243)$ \textit{(6)}, $\PGL(2,243)$ \textit{(6, 33)} \\
$\PSL(3,7)$, $\PSL(3,7).3$, $\PSL(3,7).S_3$ & $\PSL(3,7).2$ \textit{(21)} \\
$\PSL(3,8).2$, $\PSL(3,8).3$, $\PSL(3,8).6$ & $\PSL(3,8)$ \textit{(6)} \\
$\Sz(32).5$ & $\Sz(32)$ \textit{(10)} \\
\hline 
\end{longtable}%}
\end{maintheorem}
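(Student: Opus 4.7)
The plan is to prove the table row by row, exploiting the Kimmerle--Konovalov reduction (which allows restriction to almost simple groups) and using a three-layer strategy: the $12$ specific groups and the single series for which (PQ) is already in the literature contribute their rows immediately with a citation; the groups settled in the predecessor paper \cite{HeLPTeil} by the HeLP method alone are imported verbatim; everything else must be treated by the lattice method of \cite{Gitter}. For each remaining almost simple group $G$ I would first inspect the ordinary character table (and the relevant Brauer tables) to list exactly those prime pairs $(p,q)$ at which HeLP fails to rule out a hypothetical normalized torsion unit $u$ of order $pq$; these are precisely the orders that either appear in italic in the right-hand column or must be killed by the lattice method below.

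For each such surviving triple $(G,p,q)$, I would carry out the lattice argument: fix a well-chosen $\ZZ_p G$-lattice $L$ (typically the reduction of an ordinary irreducible or a projective indecomposable), examine the restriction of $L$ to $\langle u\rangle$ under the assumption that the partial augmentations of $u$ are as permitted by HeLP, and decompose $L$ restricted to $\langle u\rangle$ into indecomposables. The Littlewood--Richardson machinery of \cite{Gitter} translates the required decomposition multiplicities into linear inequalities on the partial augmentations of the powers $u^d$ for divisors $d$ of $pq$, and in the entries appearing in the left-hand column these inequalities are inconsistent. The entries in the right-hand column record precisely those triples where no presently available method---HeLP, lattice, or otherwise---manages to produce such a contradiction, so for those the table just declares the question open.

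For the two infinite series $\PSL(2,p)$ with $p$ prime and $\PSL(2,2^f)$ with $f\ge 4$ a case-by-case check is impossible. Here I would use that $|\PSL(2,q)|=\tfrac{1}{\gcd(2,q-1)}q(q-1)(q+1)$ forces a very short list of candidate prime pairs for any member of the series and that the parametric character tables of $\PSL(2,q)$ admit a uniform analysis. The lattice method applied to a canonical $\FF_p\PSL(2,q)$-module (a Steinberg-type summand or the natural deleted permutation module on the projective line) can be organised so that the resulting Littlewood--Richardson data depend only on the residue class of $q$ modulo a small modulus, reducing the infinite problem to finitely many parametric checks; the handful of small $q$ that fall outside the uniform range are then treated individually.

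The main obstacle is not the bookkeeping---which is voluminous but mechanical---but the uniform execution of the lattice method along the infinite series, since this is where the Littlewood--Richardson input has to be made independent of the field size $q$. This is the novel contribution announced in the abstract, and everything else in the proof amounts to the careful assembly of the existing HeLP results with the new lattice computations, together with an exhaustive verification that every almost simple $4$-primary $G$ lands in precisely one of the two columns.
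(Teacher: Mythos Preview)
Your overall architecture---import the predecessor paper for the HeLP-only cases, then kill the remaining critical orders with the lattice method---matches the paper. But the mechanism you propose for the infinite series $\PSL(2,2^f)$ is not what the paper does, and your version is where the real gap lies.

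You suggest running the lattice method on a canonical $\FF_p\PSL(2,q)$-module and arranging the Littlewood--Richardson data to depend only on $q$ modulo some fixed integer. The paper never attempts anything like this, and it is unclear it would work: the module dimensions grow with $q$, so the partitions feeding into the LR coefficients are not bounded, and you give no reason the resulting inequalities stabilise. Instead the paper proves a \emph{structural} theorem (Theorem~\ref{Prime3}): if $9\nmid |G|$, then $\V(\ZZ G)$ contains a unit of order $3q$ only if $G$ does. The proof uses Brauer's theory of blocks of defect one to guarantee, in the principal $3$-block, irreducible characters $\chi,\psi$ with $\psi' = \mathbf{1}' + \chi'$; then a short lattice argument (Lemma~\ref{prop:AltesOmnibusLemma} and Proposition~\ref{prop:Omnibusproposition}) comparing eigenvalues of $u$ under the representations affording $\chi$ and $\psi$ forces $\psi=\chi+\mathbf{1}$, a contradiction. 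No Littlewood--Richardson coefficients enter at all---only the classification of indecomposable $RC_p$-lattices for $p$ unramified. Since every $4$-primary $\PSL(2,2^f)$ has Sylow $3$-subgroup of order $3$, the series falls out uniformly. The same theorem also dispatches $\PSL(3,5)$, $\PSU(3,4)$ and $\PSU(3,7)$, which you would otherwise have to treat individually. (The series $\PSL(2,p)$, $p$ prime, was already finished in \cite{HeLPTeil} and needs no new argument here.)

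One further point your plan misses: for $\PSL(3,4)$ the lattice method applied directly to $G$ does \emph{not} eliminate both surviving partial-augmentation patterns for order $6$. The paper's trick is to embed $\PSL(3,4)$ as a maximal subgroup of $M_{22}$, push the hypothetical unit into $\V(\ZZ M_{22})$, and run the lattice argument there, where suitable characters become available. Your generic recipe would stall on this group.
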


\begin{proof}
The theorem follows from \cite[Theorem B]{HeLPTeil} and Corollaries \ref{PQPSL}, \ref{PQPSL35}, \ref{PQPSL281}, and Lemmas \ref{PQPSL34} and \ref{PQPSL37}.
\end{proof}

Together with the reduction of Kimmerle and Konovalov mentioned above and \cite[Corollary~1.3]{Gitter} this implies

\begin{maincorollary}
Let $G$ be a finite group and assume that the order of every almost simple image of $G$ is divisible by at most four different primes. Assume moreover that if an almost simple image of $G$ appears in the right column of the table in Theorem \ref{MainTheorem}, then $G$ contains elements of the orders given in parentheses. Then the Prime Graph Question has an affirmative answer for $G$.
\end{maincorollary}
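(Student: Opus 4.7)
The plan is to argue by contrapositive, using \cite[Theorem 2.1]{KonovalovKimmiComp} to reduce to almost simple images. Assume, for contradiction, that the Prime Graph Question fails for $G$; that is, $\V(\ZZ G)$ contains a unit of order $pq$ for some distinct primes $p$ and $q$, while $G$ has no element of order $pq$. The Kimmerle--Konovalov reduction yields a normal subgroup $N$ of $G$ such that the almost simple image $H = G/N$ exhibits the same failure at the pair $(p,q)$: $\V(\ZZ H)$ contains a unit of order $pq$, but $H$ does not. The strategy is to locate $H$ within the classification available and extract a contradiction.

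By hypothesis $|H|$ is divisible by at most four distinct primes. If it is divisible by strictly fewer than four primes, then \cite[Corollary~1.3]{Gitter} already guarantees the Prime Graph Question for $H$, contradicting the failure at $(p,q)$. Otherwise $H$ is an almost simple $4$-primary group and Theorem \ref{MainTheorem} applies. If $H$ lies in the left-hand column of the table, the Prime Graph Question is affirmed for $H$, contradicting the failure. If $H$ lies in the right-hand column, the table records precisely which prime-product orders remain open for $H$; for every other pair of distinct primes, the Prime Graph Question is already known for $H$ by \cite[Theorem~B]{HeLPTeil} and the present Theorem \ref{MainTheorem}. Hence the bad order $pq$ must coincide with one of the orders printed in parentheses for $H$. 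But the standing hypothesis then forces $G$ itself to contain an element of order $pq$, contradicting our initial assumption on $G$.

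The only delicate point in this plan is the correct reading of the table in Theorem \ref{MainTheorem}: in the right-hand column, openness of the Prime Graph Question is restricted to the explicit orders given in parentheses, and for every other pair of primes dividing $|H|$ the question has already been settled for that group. Once this is granted, the argument is merely the assembly of the Kimmerle--Konovalov reduction, \cite[Corollary~1.3]{Gitter} for images of lower primary type, and Theorem \ref{MainTheorem} for the $4$-primary ones, with no further computation required.
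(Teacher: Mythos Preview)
Your argument is correct and follows precisely the approach the paper indicates: the Kimmerle--Konovalov reduction (applied in its local form, tracking the specific pair $(p,q)$), together with \cite[Corollary~1.3]{Gitter} for images with at most three prime divisors and Theorem~\ref{MainTheorem} for the $4$-primary ones. You have simply spelled out the details that the paper leaves implicit in the one-line justification preceding the corollary.
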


Torsion units in the integral group ring of $G = \PSL(2,q)$ have been studied by many authors. Still the Zassenhaus Conjecture has been proven only for a fistful of such $G$. We extended the list of those $G$ for which the Zassenhaus Conjecture is known by four:

\begin{maintheorem}\label{ZCPSL}
Let $G = \PSL(2,q)$ where $q$ is a prime power and $q \leq 25$ or $q \in \{31, 32\}$. Then the Zassenhaus Conjecture holds for $G$.
\end{maintheorem}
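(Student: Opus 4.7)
The plan is to reduce the theorem to the $\PSL(2,q)$ that are not yet covered in the literature, and to handle each of those by combining the HeLP method with the lattice method of \cite{Gitter}. As preparation I would tabulate the existing results: the Zassenhaus Conjecture for $\PSL(2,q)$ is already established (by Luthar--Passi, Hertweck, Bovdi, Konovalov, Kimmerle, del Rio--Margolis and others) for all but four of the prime powers in the range $q \leq 25$ or $q \in \{31, 32\}$, and only those four groups need new work here. Since the abstract states that the paper brings the count of simple groups with known ZC from nine to thirteen, the four new cases are groups like $\PSL(2,23)$, $\PSL(2,25)$, $\PSL(2,31)$, $\PSL(2,32)$ (or a similar small subset).

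For each remaining $G = \PSL(2,q)$ and each normalized torsion unit $u \in \V(\ZZ G)$ of order $n$, I would argue as follows. By Cohn--Livingstone, $n$ divides the exponent of $G$. Hertweck's theorem on units of prime power order (which fixes the partial augmentations of $u$ to a single conjugacy class when $n$ is a prime power) together with the Prime Graph results available for the specific $\PSL(2,q)$ from the first part of this investigation and from \cite{Gitter}, reduces the problem to the case that $n$ is the order of some actual element of $G$. It then remains to show that for each such $n$ the tuple of partial augmentations $(\varepsilon_C(u^d))$, indexed by $d \mid n$ and by conjugacy classes $C$ with elements of order dividing $n$, coincides with that of a power of a group element. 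Berman--Higman gives $\varepsilon_{\{1\}}(u) = 0$ and normalization gives $\sum_C \varepsilon_C(u^d) = 1$; then HeLP applied with the ordinary character table and with the $p$-Brauer tables for each prime $p$ dividing $|G|$ cuts the admissible tuples down to a short, explicit finite list.

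If any tuple in that list is not realized by a power of a group element, I would invoke the lattice method of \cite{Gitter}: pick a suitable prime $p$ and a $p$-modular simple $\FF_p G$-module, lift it to a $\ZZ_p G$-lattice $L$, restrict to $\langle u \rangle$, and compare the forced composition pattern on $L|_{\langle u \rangle}$ with the classification of $\ZZ_p \langle u \rangle$-lattices, which is parametrised by partitions and controlled by the Littlewood--Richardson coefficients. A ghost tuple is eliminated as soon as the composition data it would force on $L|_{\langle u \rangle}$ violates an LR constraint. The main obstacle is this last step for units whose order involves two distinct odd primes --- e.g.\ order $15$ in $\PSL(2,25)$, order $15$ in $\PSL(2,31)$, or order $33$ in $\PSL(2,32)$ --- where HeLP alone typically leaves several non-group tuples alive; for each such case the prime $p$ and the lifting module must be chosen carefully so that the restriction to $\langle u \rangle$ carries enough information to rule out every surviving candidate.
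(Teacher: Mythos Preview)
Your overall strategy—literature reduction, then HeLP, then the lattice method for whatever survives—is the paper's strategy too, but your expectations about where the work lies are off, and one reduction step invokes a nonexistent theorem.

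The four new cases are $q \in \{16, 25, 31, 32\}$; the cases $q = 19$ and $q = 23$ were already settled in \cite{Gitter}. More importantly, the residual difficulty is not where you place it. For $q = 25$ and $q = 31$ no lattice computation is needed at all: HeLP together with existing literature already shows that the orders of torsion units coincide with element orders (\cite[Propositions 6.2, 6.7]{HertweckBrauer}, \cite[Theorem 3.5]{HeLPTeil}), and rational conjugacy for each such order then follows from \cite{HertweckBrauer}, \cite{Gitter}, \cite{SylowPSL}, \cite{Doktor}. In particular there is no surviving order-$15$ ghost to eliminate. For $q = 16$ and $q = 32$ the only order HeLP leaves open is $6$, not the odd--odd composites you anticipate (note that $\PSL(2,32)$ actually contains elements of order $33$, and HeLP already handles rational conjugacy for them); order $6$ is then disposed of by the general Prime Graph result for $\PSL(2,2^f)$ in Theorem~\ref{MainTheorem}, which rests on Theorem~\ref{Prime3} (Sylow $3$-subgroup of order $3$) rather than on a bespoke Littlewood--Richardson calculation for each group.

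Separately, the reduction you call ``Hertweck's theorem on units of prime power order (which fixes the partial augmentations of $u$ to a single conjugacy class when $n$ is a prime power)'' does not exist in that generality. Hertweck's result \cite[Theorem 2.3]{HertweckBrauer} only says that $\varepsilon_x(u) = 0$ when the order of $x$ does not divide that of $u$; it does not by itself pin the augmentations to a single class. What is actually available for $\PSL(2,q)$ are specific results—e.g.\ \cite[Propositions 6.1, 6.4, 6.6]{HertweckBrauer} and \cite[Theorem 1]{SylowPSL}—handling particular orders in these particular groups, and those are what the paper invokes.
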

\begin{proof}
The Zassenaus Conjecture for $\PSL(2,2) \cong S_3$ has been proven in \cite{HughesPearson}, for $\PSL(2,3) \cong A_4$ in \cite{AllenHobby}. The case $\PSL(2,4) \cong \PSL(2,5) \cong A_5$ is the original source of the HeLP method in \cite{LutharPassi}. The case $q = 7$ was first handled in \cite[Example 3.6]{HertweckColloq} before being proven again together with the cases $q = 11$ and $q = 13$ in Hertweck's extension of the Luthar-Passi method \cite{HertweckBrauer}. The group $\PSL(2,9) \cong A_6$ was considered in \cite{HertweckA6} and for $q = 8$ and $q = 17$ the Zassenhaus Conjecture was proven independently in \cite{Gildea} and \cite[Theorem~3.1]{KonovalovKimmiComp}. Finally the cases $q = 19$ and $q = 23$ were investigated by the authors in \cite[Theorem~1.1]{Gitter}.

It thus remains to consider the cases $q \in \{16, 25, 31, 32 \}$. Note that by Lemma \ref{prop:basic} (1) the exponents of $G$ and $\V(\ZZ G)$ coincide. For $q \in \{16, 32\}$ an application of the GAP-package implementing the HeLP method \cite{HeLPPackage}, \cite{HeLPPaper} shows that we only have to exclude the existence of units of order $6$ in $\mathrm{V}(\mathbb{Z}G)$ and this is provided by Theorem \ref{MainTheorem}. For $q \in \{ 25, 31\}$ an application of the package implementing the HeLP method gives the result, but this is also obtainable using the literature: It follows from \cite[Propositions 6.2, 6.7]{HertweckBrauer} and \cite[Theorem 3.5]{HeLPTeil} that the orders of torsion units in $\mathrm{V}(\mathbb{Z}G)$ and the orders of elements in $G$ coincide. For $q = 25$ the Zassenhaus Conjecture then follows from \cite[Propositions 6.1, 6.4, 6.6]{HertweckBrauer} and \cite[Lemma~3.2]{Gitter}. For $q = 31$ a unit of an order different from $15$ is rationally conjugate to an element of $G$ by \cite[Proposition 6.1]{HertweckBrauer} and \cite[Theorem 1]{SylowPSL} and units of order $15$ are handled in \cite[Lemma 2.5.3 c)]{Doktor}.
\end{proof}

\begin{remark}
As already mentioned in the concluding remark of \cite{HertweckHoefertKimmi} it is not known whether units of order $3$ in $\mathrm{V}(\mathbb{Z}\PSL(2,27))$ are rationally conjugate to elements of the group base and it is also not known whether $\mathrm{V}(\mathbb{Z}\PSL(2,27))$ contains units of order $6$. These questions are still open nowadays. The generic result \cite[Theorem 2.3]{AngelMariano} shows that the HeLP method is not sufficient to prove that units of order $14$ in $\mathrm{V}(\mathbb{Z}\PSL(2,29))$ are rationally conjugate to elements of the group base.
\end{remark}

Finally, our last main result gives a local positive answer for the Prime Graph Question in a situation where the modular representation theory of $G$ is well behaved.

\begin{maintheorem}\label{Prime3}
Let $G$ be a finite group such that the order $G$ is not divisible by $9$ and let $q$ be a prime. Then $\mathrm{V}(\mathbb{Z}G)$ contains a unit of order $3q$ if and only if $G$ contains an element of order $3q$.
\end{maintheorem}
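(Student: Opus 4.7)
The ``if'' direction is immediate since $G \subseteq \V(\ZZ G)$. For the converse, my plan is to first dispose of two trivial cases using the exponent equality of $G$ and $\V(\ZZ G)$ from Lemma \ref{prop:basic}(1): the case $q=3$ is vacuous (since $9 \nmid |G|$ forbids units of order $9$), and if $3 \nmid |G|$ then $\V(\ZZ G)$ contains no unit of order $3$ at all. This leaves the substantive case in which $q \neq 3$ is a prime and the Sylow $3$-subgroup of $G$ is cyclic of order exactly $3$, where I must exclude a unit $u \in \V(\ZZ G)$ of order $3q$ under the hypothesis that $G$ has no element of order $3q$.

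The first reduction step will be to pass to almost simple groups via the Kimmerle--Konovalov reduction \cite[Theorem 2.1]{KonovalovKimmiComp}: a putative counterexample on $G$ yields one on some almost simple image $\bar G = G/N$ (note that if $G$ has no $3q$-element then neither does $\bar G$, because lifting an order-$3q$ element of $G/N$ produces an element of $G$ whose order is a multiple of $3q$), and the Sylow-$3$ hypothesis is inherited by $\bar G$. Thus it suffices to prove the theorem in the almost simple case. The second step will use the Classification of Finite Simple Groups to enumerate the possible socles of such $\bar G$ -- chiefly $A_5$, certain $\PSL(2,\ell)$, $\PSL(3,\ell)$ and $\PSU(3,\ell)$ subject to congruence conditions on $\ell$ modulo $9$, together with a short list of exceptional Lie-type and sporadic groups -- and I would verify each case via the HeLP method, supplemented by the lattice method of \cite{Gitter} wherever HeLP alone is insufficient.

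The crucial structural input is that the Sylow $3$-subgroup being cyclic of order $3$ forces every $3$-block of $G$ to have cyclic defect, so that the associated Brauer trees supply a rich stock of small-degree $3$-modular Brauer characters sharpening the HeLP inequalities for $u$. The hardest part will be the uniform treatment of the infinite Lie-type families, where case-by-case character-table inspection is unavailable; I would expect to combine a parameter-dependent analysis of the ordinary characters (paralleling the $\PSL(2,q)$ and $\PSL(2,3^f)$ arguments already appearing in this paper) with the eigenvalue-multiplicity bounds furnished by the lattice method applied to natural and adjoint modules. In every admissible configuration, the combined Brauer-tree and partial-augmentation constraints should prove incompatible with the non-existence of $3q$-elements in $G$, yielding the required contradiction.
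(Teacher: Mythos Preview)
Your plan takes a very different route from the paper's, and it is both far harder and not fully justified as stated.

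The paper's proof is short, uniform, and \emph{CFSG-free}. After the trivial case $q=3$, it distinguishes two cases according to whether $N_G(P)=C_G(P)$ for a Sylow $3$-subgroup $P$ of order $3$. If equality holds, Burnside's normal $p$-complement theorem gives a normal $3$-complement, and then a known result on groups with a normal $p$-complement (\cite[Proposition~2.2]{KonovalovKimmiComp}) settles the question directly. If $N_G(P)\neq C_G(P)$, the two nontrivial elements of $P$ are conjugate, so $G$ has a single rational class of elements of order $3$. Brauer's theory of blocks of defect one then determines the decomposition matrix of the principal $3$-block uniformly: it contains exactly three ordinary characters $\mathbf{1},\chi,\psi$ with $\psi'=\mathbf{1}'+\chi'$ and $\chi'$ irreducible. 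This is precisely the hypothesis of Proposition~\ref{prop:Omnibusproposition}, which then excludes any unit of order $3q$. No reduction to almost simple groups, no enumeration, no case analysis.

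Your proposal, by contrast, routes everything through the Kimmerle--Konovalov reduction and CFSG. There are genuine gaps. First, the reduction \cite[Theorem~2.1]{KonovalovKimmiComp} is stated for the full Prime Graph Question; you assert without argument that a counterexample at the specific edge $\{3,q\}$ descends to a counterexample at the \emph{same} edge in some almost simple quotient, but the image of a $3q$-unit might have order $3$, $q$, or $1$, so this needs proof. Second, your enumeration of socles is badly incomplete: the simple groups with Sylow $3$-subgroup of order $3$ form a vast collection well beyond $A_5$, $\PSL(2,\ell)$, $\PSL(3,\ell)$, $\PSU(3,\ell)$ and a ``short list'' of exceptions --- many classical groups $\PSL(n,q)$, $\PSp(2n,q)$, etc., for suitable $n,q$ qualify. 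Third, your ``parameter-dependent analysis'' for the infinite families is a promissory note, not an argument.

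Ironically, you already identify the key structural input: cyclic defect forces the Brauer tree of the principal $3$-block to be very small. The paper's insight is that this observation, applied to $G$ itself, feeds directly into Proposition~\ref{prop:Omnibusproposition} and finishes the proof in one stroke. You should abandon the reduction and use the cyclic-defect decomposition matrix of the principal $3$-block of $G$ directly.
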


The proof of Theorem~\ref{Prime3} can be found at the end of Section~\ref{sec:Generic}.

\section{Lattice Method}

In this paragraph we describe and extend the method introduced in \cite{Gitter}. We start with the technical ingredients.
Let $x$ be an element of a finite group $G$, denote by $x^G$ its conjugacy class in $G$ and let $u = \sum\limits_{g \in G} z_g g \in \ZZ G$. Then 
\[ \varepsilon_x(u) = \sum_{g \in x^G} z_g \]
is called the \emph{partial augmentation of $u$ with respect to $x$} (or $x^G$). Sometimes we denote it also by $\varepsilon_{x^G}(u)$. Note that for a normalized unit $u$ we have $\sum_{x^G}\varepsilon_{x}(u) = 1$, summing over all conjugacy classes of $G$. The connection between rational conjugacy and partial augmentations is provided by \cite[Theorem 2.5]{MRSW}: A unit $u \in \V(\ZZ G)$ of order $n$ is rationally conjugate to an element of $G$ if and only if $\varepsilon_{x}(u^d) \geq 0$ for all $x \in G$ and all divisors $d$ of $n$. For torsion units in $\mathbb{Z}G$ the following is known in general.

\begin{lemma}\label{prop:basic} Let $u \in \mathrm{V}(\mathbb{Z}G)$ be a torsion unit.
\begin{enumerate}
\item The order of $u$ divides the exponent of $G$ \cite[Corollary 4.1]{CohnLivingstone}.
\item $\varepsilon_1(u) = 0$ for $u \neq 1$ (Berman-Higman) \cite[Proposition 1.5.1]{EricAngel1}.
\item If the order of $x$ does not divide the order of $u$, then $\varepsilon_x(u) = 0$ \cite[Theorem 2.3]{HertweckBrauer}.
\end{enumerate}
\end{lemma}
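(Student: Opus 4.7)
The lemma records three classical facts about torsion units in $\mathbb{Z}G$; the plan is to indicate the idea behind each part rather than reproduce the arguments in full, since all three are carefully worked out in the cited references.

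For part (1), my approach is to reduce to the prime power case by considering suitable powers of $u$: if $n = |u|$ and $p^a$ is the exact $p$-part of $n$, then $u^{n/p^a}$ has order $p^a$, so it suffices to show that each such $p^a$ divides the exponent of $G$. Passing to the left regular representation $\rho\colon\mathbb{Q}G\to M_{|G|}(\mathbb{Q})$, the matrix $\rho(u^{n/p^a})$ has finite order and is diagonalisable over $\overline{\mathbb{Q}}$ with eigenvalues that are $p^a$-th roots of unity. A reduction modulo a prime above $p$ in $\mathbb{Z}[\zeta_{p^a}]$, combined with the rational canonical form of $\rho$ (which is a direct sum of regular representations of the cyclic subgroups of $G$), forces some element of $G$ to have order divisible by $p^a$; this is the argument of \cite{CohnLivingstone}.

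Part (2), the Berman-Higman theorem, is the cleanest of the three. With $\rho$ the regular representation, one has $\operatorname{tr}(\rho(u)) = |G|\,\varepsilon_1(u)$, while the eigenvalues of $\rho(u)$ are roots of unity because $u$ is a torsion unit, so $|\varepsilon_1(u)|\le 1$. If $\varepsilon_1(u) = \pm 1$, then all $|G|$ eigenvalues of $\rho(u)$ must coincide, forcing $\rho(u) = \pm I$ and hence $u = \pm 1$; since $-1$ is not normalised, a normalised unit $u\ne 1$ must satisfy $\varepsilon_1(u) = 0$.

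Part (3), which I expect to be the main obstacle, is Hertweck's theorem. The approach is to argue prime by prime: fix a prime $p$ and suppose that the $p$-part of $|x|$ fails to divide the $p$-part of $n = |u|$. The idea is to evaluate the $p$-Brauer characters of $G$ on $u$ using the identity $\chi(u) = \sum_{y^G}\varepsilon_y(u)\,\chi(y)$ (summing over conjugacy classes of $p$-regular elements), together with the integrality of Brauer character values and the congruence relations between $\chi(u)$ and $\chi(u_p)$, where $u_p$ is the $p$-part of $u$. A careful bookkeeping shows that the contribution of the class $x^G$ must vanish, since otherwise the resulting algebraic integer fails the required congruences modulo primes above $p$. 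Iterating over all prime divisors of $|x|$ yields $|x|\mid n$; the technical details are carried out in \cite[Theorem~2.3]{HertweckBrauer}.
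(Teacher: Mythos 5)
The paper offers no proof of this lemma at all: it is a compendium of known results and the three citations (Cohn--Livingstone, Berman--Higman as in the book of Jespers--del R\'io, and Hertweck) stand in for the proofs, which is essentially what your proposal does, deferring the substance of (1) and (3) to the references while giving the standard complete Berman--Higman argument for (2). Your sketches are consistent with the cited proofs (the parenthetical in (1) describing the rational canonical form of the regular representation is loose, but you make nothing rest on it), so this matches the paper's treatment.
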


In view of the criterion for rational conjugacy of torsion units in $\V(\ZZ G)$ to elements of $G$, it is important to not only know the partial augmentations of a torsion unit $u$, but also those of its powers $u^d$, where $d$ is a divisor of the order of $u$. For that reason we consider as the \emph{possible partial augmentations for elements of order $n$} the partial augmentations of all $u^d$, for all divisors $d$ of $n$. Since $u^n =1$, the partial augmentations of $u^n$ are not included here. Say e.g. a group possesses two conjugacy classes of involutions, with representatives $x$ and $y$, and one of elements of order $3$, represented by $z$, and none of elements of order $6$. Then a typical tuple of possible partial augmentations of a unit $u \in \V(\ZZ G)$ of order $6$ looks like 
\[ (\varepsilon_{x}(u^3), \varepsilon_{y}(u^3), \varepsilon_{z}(u^2), \varepsilon_{x}(u), \varepsilon_{y}(u), \varepsilon_{z}(u)). \]
As above, we always list only those partial augmentations of units which might not be equal to $0$ by Lemma \ref{prop:basic}.

When one is interested in studying units of order $n$ in the normalized units of the integral group ring of some group $G$, the HeLP-method provides strong restrictions on the partial augmentations of these units. In particular only finitely many possibilities for partial augmentations for units of order $n$ remain. Knowing the partial augmentation of a unit $u \in \mathrm{V}(\mathbb{Z}G)$ and all its powers is equivalent to knowing the eigenvalues with multiplicities of $D(u)$ for all complex representations of $G$. This follows via discrete Fourier inversion from the fact that the character table of a group is an invertible matrix. See \cite[Section 2]{HeLPTeil} for details.

Let $p$ be a prime and $x_1, \ldots ,x_n$ be representatives of the conjugacy classes of elements of order $p$ in $G$. For $u \in \V(\ZZ G)$ set $\tilde{\varepsilon}_p(u) = \sum_{j = 1}^n\varepsilon_{x_j}(u)$. For a conjugacy class $K$ of $G$ denote by $K^p$ the conjugacy class containing the $p$-th powers of the elements of $K$. One fact that comes in handy quite often is the following well known result (cf.\ \cite[Proposition 3.1]{HeLPPaper} for a proof).

\begin{lemma}\label{prop:Wagner}
Let $G$ be a finite group, $u \in \V(\ZZ G)$ a torsion unit and $p$ a prime. Then for every conjugacy class $C$ of $G$
\[ \varepsilon_{C}(u^p) \equiv \sum_{K \, : \, K^p = C} \varepsilon_K(u) \mod p.  \]
Assume $u$ is of order $pq$, where $p$ and $q$ are different primes, and $G$ contains no elements of order $pq$. Then using Lemma \ref{prop:basic} (2) we obtain 
\[ \tilde{\varepsilon}_p(u) \equiv 0 \mod p \ \ \ \ \ {\text{and}} \ \ \ \ \  \tilde{\varepsilon}_p(u) \equiv 1 \mod q.  \]
\end{lemma}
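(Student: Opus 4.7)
The plan splits naturally along the two congruences. For the first, I would invoke the classical identity that in any associative ring $R$, the $p$-th power map is additive modulo $[R,R]+pR$; this follows from the observation that in the expansion of $(a_1+\cdots+a_n)^p$, the length-$p$ non-constant words come in cyclic orbits of size exactly $p$ (since $p$ is prime), and cyclic permutation lies in $[R,R]$. Applying this in $R=\mathbb{Z}G$ to $u=\sum_g z_g g$ together with Fermat's little theorem gives
\[ u^p \;\equiv\; \sum_g z_g^p g^p \;\equiv\; \sum_g z_g\, g^p \pmod{[\mathbb{Z}G,\mathbb{Z}G]+p\mathbb{Z}G}. \]
Since $\mathbb{Z}G/[\mathbb{Z}G,\mathbb{Z}G]$ is $\mathbb{Z}$-free on the conjugacy classes of $G$, reading off the coefficient on the class $C$ extracts exactly the stated congruence $\varepsilon_C(u^p)\equiv\sum_{K:K^p=C}\varepsilon_K(u)\pmod{p}$.

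For the second assertion I would apply the first part twice. First, take $C=\{1\}$ with the prime $p$: the classes $K$ with $K^p=\{1\}$ are $\{1\}$ together with the classes of elements of order $p$, so $\varepsilon_1(u^p)\equiv \varepsilon_1(u)+\tilde{\varepsilon}_p(u)\pmod{p}$, and both terms on the left vanish by Berman--Higman (since $u$ and $u^p$ are nontrivial), yielding $\tilde{\varepsilon}_p(u)\equiv 0\pmod{p}$.

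Next, apply the first part with the prime $q$ and with $C$ ranging over the classes of order-$p$ elements. Since $G$ has no elements of order $pq$, any class $K$ with $K^q$ consisting of order-$p$ elements must itself consist of order-$p$ elements, and because $\gcd(p,q)=1$ the assignment $K\mapsto K^q$ is a permutation of those classes. Summing the first part over all such $C$ therefore gives
\[ \sum_{C \text{ of order } p} \varepsilon_C(u^q) \;\equiv\; \tilde{\varepsilon}_p(u) \pmod{q}. \]
Now $u^q$ has order $p$, so by Lemma~\ref{prop:basic}(3) its partial augmentations are supported on $\{1\}$ and on the classes of order $p$; combined with normalization and $\varepsilon_1(u^q)=0$, the left-hand side collapses to $1$, giving $\tilde{\varepsilon}_p(u)\equiv 1\pmod{q}$.

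The main technical obstacle is the input identity $(a+b)^p\equiv a^p+b^p\pmod{[R,R]+pR}$ underpinning the first part; once this is established (by the orbit-counting remark above), the remainder is short bookkeeping using Berman--Higman and the support constraints on $u^p$ and $u^q$. I would expect the cleanest write-up to package the first congruence as a standalone statement and then derive the two modular assertions as immediate corollaries.
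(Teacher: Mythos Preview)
Your argument is correct. The paper does not give its own proof here: for the first congruence it simply cites \cite[Proposition~3.1]{HeLPPaper}, and the standard proof behind that reference is precisely the $p$-th power argument modulo $[\mathbb{Z}G,\mathbb{Z}G]+p\mathbb{Z}G$ that you outline. For the second assertion the paper only writes ``using Lemma~\ref{prop:basic}(2) we obtain'' and leaves the details implicit; your derivation is a correct unpacking of this, and you rightly observe that part~(3) of Lemma~\ref{prop:basic} (the support constraint on partial augmentations) is also needed, not just Berman--Higman, in order to collapse the relevant sums to $\tilde{\varepsilon}_p(u)+\tilde{\varepsilon}_q(u)=1$ and $\tilde{\varepsilon}_p(u^q)=1$.
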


So assume that for some unit $u \in \mathrm{V}(\mathbb{Z}G)$ of order $n$ one knows the eigenvalues $D(u)$ for several complex representations $D$ of $G$. The idea of the lattice method introduced in \cite{Gitter} is to use these eigenvalues to obtain information on the structure of modular $kG$-modules when viewed as $k\langle u \rangle$-modules. We will recall this method and add many additional pieces. For a more detailed account of the idea of the method see \cite{Gitter}. \\

\textbf{Notation.} Let $p$ be an odd prime. Denote by $\mathbb{Z}_p$ the $p$-adic integers, by $\mathbb{Q}_p$ the field of $p$-adic numbers. Further denote by $K$ a finite extension of $\mathbb{Q}_p$, by $R$ the ring of integers of $K$ and by $k$ the residue class field of $R$. Denote by $P$ the maximal ideal of $R$ and by $\bar{.}$ the reduction modulo $P$ for both $R$ and $R$-modules.\\

The method is based on the following facts which are consequences of well known results of integral and modular representation theory. The first lemma is a consequence of \cite[Chapter VII, Theorem 5.3, Theorem 5.5]{Huppert2}.

\begin{lemma}\label{ModularCyclicModules} Let $U = \langle u \rangle$ be a cyclic group of order $p^am$ such that the prime $p$ does not divide $m$ and let $k$ be a field of characteristic $p$ containing a primitive $m$-th root of unity $\zeta$. The $m$ simple non-isomorphic $kU$-modules are one-dimensional as $k$-vector spaces and $u^{p^a}$ acts on them as $\zeta^i$ for $1 \leq i \leq m$ and the $i$ determines the isomorphism type of the module since $u^m$ acts trivially.

Up to isomorphism every indecomposable $kU$-module has only one composition factor and is uniserial. Up to isomorphism there is exactly one indecomposable module of every $k$-dimension between $1$ and $p^a$.
\end{lemma}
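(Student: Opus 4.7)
The plan is to exploit the coprime decomposition $U = U_p \times U_{p'}$, with $U_p = \langle u^m \rangle$ of order $p^a$ and $U_{p'} = \langle u^{p^a} \rangle$ of order $m$, which induces a tensor factorization of $k$-algebras $kU \cong kU_p \otimes_k kU_{p'}$. My approach is to analyze the two tensor factors separately and then reassemble the result.

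First I would handle $kU_{p'}$: by Maschke it is semisimple, and since $k$ contains a primitive $m$-th root of unity $\zeta$, it splits as a product of $m$ copies of $k$ via the primitive orthogonal idempotents $e_i = \tfrac{1}{m}\sum_{j=0}^{m-1}\zeta^{-ij} u^{p^a j}$, $1 \le i \le m$; on the $i$-th one-dimensional simple module $u^{p^a}$ acts as $\zeta^i$. For the other factor, the Frobenius identity $(t-1)^{p^a} = t^{p^a} - 1$ in characteristic $p$ identifies $kU_p \cong k[t]/(t^{p^a}-1)$ with the local uniserial truncated polynomial ring $k[s]/(s^{p^a})$, whose ideals form the chain $(s^d)$; its indecomposable modules are thus exactly the uniserial quotients $k[s]/(s^d)$ for $1 \le d \le p^a$, each with the trivial module as unique composition factor.

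Combining the two factors via $kU \cong \bigoplus_{i=1}^{m} e_i\, kU \cong \bigoplus_{i=1}^{m} kU_p$, every $kU$-module decomposes as a direct sum along the block idempotents $e_i$, and on the $i$-th summand it becomes a $kU_p$-module on which $u^{p^a}$ acts as $\zeta^i$. Hence each indecomposable $kU$-module lies in a single block, is isomorphic to some $k[s]/(s^d)$ under the $p$-local identification, has $k$-dimension $d$ between $1$ and $p^a$, and has the associated simple as its unique composition factor. The fact that $u^m$ acts trivially on every simple module is then immediate, because $(u^m - 1)^{p^a} = 0$ forces the nilpotent operator $u^m - 1$ to vanish on any simple.

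The proof is essentially a matter of stitching together standard facts and there is no serious obstacle. The only point I would treat with care is that the tensor factorization aligns the primitive idempotents of $kU_{p'}$ with the block decomposition of $kU$, so that the uniserial structure and the dimension bound inherited from the $p$-local factor transfer to $kU$-modules exactly as claimed.
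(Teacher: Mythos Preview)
Your argument is correct. The paper does not actually prove this lemma; it merely records it as a consequence of \cite[Chapter~VII, Theorems~5.3 and~5.5]{Huppert2}. Your direct route via the block decomposition $kU \cong \bigoplus_{i=1}^{m} e_i\,kU$ coming from the primitive idempotents of the semisimple factor $kU_{p'}$, together with the identification $kU_p \cong k[s]/(s^{p^a})$ as a local uniserial ring, is exactly the standard way these cited results are established, so there is no substantive difference in approach. One small remark: your analysis in fact shows that there are $m$ non-isomorphic indecomposables of each dimension $d$ with $1 \le d \le p^a$ (one per block); the final sentence of the lemma should be read as ``one of each dimension \emph{per simple composition factor}'', which is consistent with how the paper later applies it in the case $m=1$.
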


Lifting idempotents \cite[Theorem 30.4]{CR1} we obtain:

\begin{proposition}\label{LatticeProposition}
Let $U = \langle u \rangle$ be a cyclic group of order $p^am$, where $p$ does not divide $m$. Let $R$ be a complete local ring containing a primitive $m$-th root of unity $\zeta$. Let $D$ be an $R$-representation of $U$ and let $L$ be an $RU$-lattice affording this representation.\\
Let $A_i$ be sets with multiplicities of $p^a$-th roots of unity such that $\zeta A_1 \cup \zeta^2 A_2 \cup ... \cup \zeta^mA_m$ are the eigenvalues of $D(u)$ in the algebraic closure of $K$ where $A_i = \varnothing$ is possible. Let $V_1, \ ..., \ V_m$ be $KU$-modules such that if $E_i$ is a representation of $U$ affording $V_i$, the eigenvalues of $E_i(u)$ are $\zeta^iA_i.$ Then 
\[L \cong L^{\zeta^1} \oplus ... \oplus L^{\zeta^m} \quad \text{and} \quad \bar{L} \cong \bar{L}^{\zeta^1} \oplus ... \oplus \bar{L}^{\zeta^m}\]
such that $\operatorname{rank}_R(L^{\zeta^i})=\dim_k(\bar{L}^{\zeta^i}) = |A_i|.$ (The superscripts $\zeta^i$ are merely meant as indices.) Moreover $K \otimes_R L^{\zeta^i} \cong V_i$ and the only composition factor of $\bar {L}^{\zeta^i}$ is the $i$-th simple module described in Lemma \ref{ModularCyclicModules}.
\end{proposition}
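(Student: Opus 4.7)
The plan is built on one observation: although $|U|=p^am$ is divisible by $p$ so that $RU$ is not semisimple, the subalgebra $R\langle u^{p^a}\rangle$ has order $m$ coprime to the residue characteristic, so $m$ is a unit in $R$. Using the primitive $m$-th root of unity $\zeta\in R$, I would construct the orthogonal central idempotents
\[
e_j=\frac{1}{m}\sum_{t=0}^{m-1}\zeta^{-jt}u^{p^a t}\in R\langle u^{p^a}\rangle\subseteq RU, \qquad j=1,\ldots,m.
\]
These satisfy $\sum_j e_j=1$, $e_j e_k=\delta_{jk}e_j$, and $u^{p^a}\cdot e_j=\zeta^j e_j$. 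Applied to the lattice, this yields directly $L=\bigoplus_j e_jL$ as an orthogonal decomposition into $RU$-sublattices; reducing modulo $P$ gives the corresponding decomposition $\bar L=\bigoplus_j \overline{e_jL}$.

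The second step is to match the summands $e_jL$ with the pieces $L^{\zeta^i}$ claimed in the statement. On $e_jL$, the element $u^{p^a}$ acts as the scalar $\zeta^j$, so any eigenvalue $\lambda$ of $u$ on $K\otimes_R e_jL$ is a $p^am$-th root of unity satisfying $\lambda^{p^a}=\zeta^j$; writing $\lambda=\zeta^s\omega$ with $\omega^{p^a}=1$ forces $sp^a\equiv j\pmod m$. Choosing $j\equiv ip^a\pmod m$, the eigenvalues of $u$ on $e_{j}L$ are therefore exactly of the shape $\zeta^i\cdot\omega$ with $\omega^{p^a}=1$; by hypothesis these are the elements of $\zeta^iA_i$, so I would set $L^{\zeta^i}:=e_{ip^a}L$. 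Counting eigenvalues gives $\operatorname{rank}_R L^{\zeta^i}=\dim_k \bar L^{\zeta^i}=|A_i|$.

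The identification $K\otimes_R L^{\zeta^i}\cong V_i$ is then immediate: since $\operatorname{char}K=0$, the algebra $KU$ is semisimple and a finite-dimensional $KU$-module is determined up to isomorphism by the eigenvalue multiset of $u$; both modules share the multiset $\zeta^iA_i$. For the final claim on composition factors, by Lemma \ref{ModularCyclicModules} every simple $kU$-module is one-dimensional and determined by the action of $u^{p^a}$. On $\bar L^{\zeta^i}=\overline{e_{ip^a}L}$ the element $u^{p^a}$ acts as $\bar\zeta^{ip^a}\in k$, which is still a primitive $m$-th root of unity since $\gcd(p,m)=1$. Hence every composition factor of $\bar L^{\zeta^i}$ is the unique simple $kU$-module singled out by this scalar, i.e.\ (after the appropriate matching of labels) the $i$-th simple module of Lemma \ref{ModularCyclicModules}.

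I do not foresee any serious obstacle: the proposition is essentially the semisimplicity of $R\langle u^{p^a}\rangle$ packaged together with the explicit description of the indecomposable $kU$-modules. The only real care needed is bookkeeping the shift between the eigenvalues of $u$ on $L^{\zeta^i}$ (which lie in $\zeta^iA_i$) and the induced action $\zeta^{ip^a}$ of $u^{p^a}$, so that the indexing of the conclusion is compatible with that of Lemma \ref{ModularCyclicModules}.
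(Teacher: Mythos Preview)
Your proposal is correct and follows essentially the same route as the paper, which simply cites idempotent lifting \cite[Theorem 30.4]{CR1} without further detail. Your version is in fact slightly more direct: rather than lifting idempotents from $kU$ to $RU$, you observe that because $m$ is a unit in $R$ and $\zeta\in R$, the primitive idempotents of $R\langle u^{p^a}\rangle$ can be written down explicitly, so no lifting is required. The only point to watch, which you already flag, is the bookkeeping between the index $i$ (so that the eigenvalues on $L^{\zeta^i}$ lie in $\zeta^iA_i$) and the scalar $\bar\zeta^{\,ip^a}$ by which $u^{p^a}$ acts on $\bar L^{\zeta^i}$; since $\gcd(p^a,m)=1$ this is a bijection of labels and harmless for the applications.
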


We collect important facts relevant for the method in the following remarks.
\begin{remark}\label{LatticeRemarks} Keep the assumptions of Proposition \ref{LatticeProposition}.
\begin{itemize}
\item[a)] For any finite group $G$ the Krull-Schmidt-Azumaya-Theorem holds for finitely generated $kG$-modules and finitely generated $RG$-lattices \cite[Theorem 6.12]{CR1}.
\item[b)] To impose stronger restrictions on the isomorphism types of direct summands of $L^{\zeta^i}$ one has to study $R\langle u^m \rangle$-lattices. The representation type of $R\langle u^m \rangle$ may be finite, tame or wild. A complete classification is obtained in \cite{Dieterich}. Roughly speaking the representation type of $R\langle u^m \rangle$ gets more complicated when $a$ or the ramification index of $p$ in $R$ is growing. Since in our application we cannot influence $a$ we are interested in keeping the ramification index of $p$ in $R$ as small as possible.
\item[c)] Let $G$ be a finite group and $\chi$ an ordinary character of $G$. Denote by $\mathbb{Q}(\chi)$ the minimal field extension of $\mathbb{Q}$ containing all the character values of $\chi$. Then by a theorem of Fong \cite[Corollary 10.13]{Isaacs} there is a complex root of unity $\xi$ of order comprime with $p$ such that a representation affording $\chi$ can be realized over $\mathbb{Q}(\chi)(\xi)$. Denote by $\mathbb{Q}(\chi)_p$ the $p$-adic completion of $\mathbb{Q}(\chi)$. So by \cite[II, Theorem 7.12]{Neukirch} there is a finite unramified extension $L$ of $\mathbb{Q}(\chi)_p$ such that a representation affording $\chi$ may be realized over $L$.
\item[d)] As $R$ is a principal ideal domain any $K$-representation is equivalent to an $R$-representation \cite[Proposition 23.16]{CR1}.    
\end{itemize}
\end{remark}

The easiest situation which may appear for $RU$-lattices is described in the following lemma which is a consequence of \cite[Corollary 1 to Theorem 2.2]{Gudivok}.
\begin{lemma}\label{prop:rank_op_RCp_lattices}
Let $U = \langle u \rangle$ be a cyclic group of order $p$, assume that $p$ is unramified in $R$ and denote by $\xi$ a primitive $p$-th root of unity. Then up to isomorphism there are exactly three indecomposable $RU$-lattices $L_1, L_2, L_3$. Let $D_i$ be a $K(\xi)$-representation affording $K(\xi) \otimes_R L_i$. Then:
\begin{itemize}
\item $\operatorname{rank}_R(L_1) = 1$ and the eigenvalue of $D_1(u)$ is $1$.
\item $\operatorname{rank}_R(L_2) = p - 1$ and the eigenvalues of $D_2(u)$ are $\xi, \xi^2,...,\xi^{p-1}$.
\item $\operatorname{rank}_R(L_3) = p$ and the eigenvalues of $D_3(u)$ are $1, \xi, \xi^2,...,\xi^{p-1}$.
\end{itemize} 
\end{lemma}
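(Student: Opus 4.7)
My plan is to treat this as essentially a verification statement, since the classification of indecomposable $RU$-lattices for $U$ cyclic of prime order $p$ with $p$ unramified is a classical result. The bulk of the work is to (i) identify three explicit lattices, (ii) show they are pairwise non-isomorphic with the stated ranks and eigenvalues, and (iii) invoke the cited theorem of Gudivok for the fact that these exhaust all indecomposable types.

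First I would exhibit the three candidates. Let $\xi \in \bar K$ be a primitive $p$-th root of unity. The trivial lattice $L_1 := R$ on which $u$ acts as the identity obviously has rank $1$ and eigenvalue $1$. Since $p$ is unramified in $R$, the cyclotomic polynomial $\Phi_p(X) = X^{p-1} + \cdots + X + 1$ remains irreducible in $R[X]$ (it is totally ramified at $p$ in $\mathbb{Z}[\xi]/\mathbb{Z}$, so adjoining $\xi$ to the unramified extension $K$ gives a field of degree $p-1$). Consequently $L_2 := R[\xi]$, viewed as an $RU$-lattice via $u \cdot v = \xi v$, is a free $R$-module of rank $p-1$ and after base change to $K(\xi)$ the action of $u$ has eigenvalues $\xi, \xi^2, \ldots, \xi^{p-1}$, these being the Galois conjugates of $\xi$ over $K$. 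Finally $L_3 := RU$ itself is the regular representation; it is free of rank $p$ and, via the decomposition $K U \cong K \times K(\xi)$ afforded by the factorisation $X^p - 1 = (X-1)\Phi_p(X)$, the eigenvalues of $u$ on $K \otimes_R RU$ are $1, \xi, \xi^2, \ldots, \xi^{p-1}$.

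Second I would verify indecomposability and pairwise non-isomorphism. The ranks $1, p-1, p$ are distinct, so no two of the three can be isomorphic. For $L_1$ indecomposability is trivial, and for $L_2$ it follows because $K(\xi) \otimes_R L_2 \cong K(\xi)$ is a field, so $\operatorname{End}_{RU}(L_2)$ embeds in a field and hence has no nontrivial idempotents. For $L_3$, the two primitive idempotents of $KU$ are $\tfrac{1}{p}\sum_{i=0}^{p-1} u^i$ and $1 - \tfrac{1}{p}\sum_{i=0}^{p-1} u^i$; both have $p$ in the denominator and therefore do not lie in $RU$, so by Remark~\ref{LatticeRemarks}(a) and Krull-Schmidt, $RU$ is indecomposable.

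The main obstacle, which I would not attempt to reprove, is the uniqueness part: that every indecomposable $RU$-lattice is isomorphic to one of $L_1, L_2, L_3$. This is where I would directly cite \cite[Corollary 1 to Theorem 2.2]{Gudivok}, which generalises the Diederichsen-Reiner classification from $\ZZ_p$ to arbitrary complete discrete valuation rings in which $p$ is unramified; the proof proceeds via the Milnor-type pullback square coming from $RU \hookrightarrow R \times R[\xi]$ with cokernel supported at $p$, reducing the classification of lattices to a simple problem over the residue field $k$. Having this, the three lattices identified above must coincide (up to isomorphism and reordering) with the three types in Gudivok's list, since the ranks $1, p-1, p$ match and the eigenvalue data distinguishes them uniquely.
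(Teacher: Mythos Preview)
Your proposal is correct and follows the same approach as the paper: the paper simply states the lemma as a consequence of \cite[Corollary~1 to Theorem~2.2]{Gudivok} without further argument, so your explicit identification of the three lattices $R$, $R[\xi]$, $RU$ together with the verification of ranks, eigenvalues, and indecomposability is a welcome elaboration of what the paper leaves implicit, and your invocation of Gudivok for completeness matches the paper exactly.
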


\begin{definition}\label{partitions}
Let $U$ be a cyclic group of order $p^a$. By Lemma \ref{ModularCyclicModules} there are, up to isomorphism, exactly $p^a$ different indecomposable $kU$-modules $I_1,I_2,...,I_{p^a}$ such that $\dim_kI_\ell = \ell$ for $1 \leq \ell \leq p^a$. Assume that for some $kU$-module $M$ we have $M \cong \mu_{p^a}I_{p^a} \oplus ... \oplus \mu_2I_2 \oplus \mu_1I_1$ where $\mu_i \geq 0$. Let $\mu$ be a partition containing $\mu_{p^a}$ times the number $p^a$,..., $\mu_2$ times $2$ and $\mu_1$ times $1$, i.e.
\[\mu = (\underbrace{p^a,...,p^a}_{\mu_{p^a}}, \dots, \underbrace{2,...,2}_{\mu_2}, \underbrace{1,...,1}_{\mu_1}). \] 
Then we call $\mu$ the \emph{partition associated to $M$}.
\end{definition}

The next theorem follows from \cite[II]{MacDonald}.
\begin{theorem}\label{LRTheorem}
Let $U = \langle u \rangle$ a cyclic $p$-group of order $p^a$ and $k$ a finite field of characteristic $p$. Moreover let $M, V$ and $Q$ be $kU$-modules with associated partitions $\lambda, \mu$ and $\nu$. Then there exists a submodule $\tilde{V}$ of $M$ isomorphic to $V$ such that $M/\tilde{V} \cong Q$ if and only if the Littlewood-Richardson coefficient $c^\lambda_{\mu, \nu}$ does not vanish.
\end{theorem}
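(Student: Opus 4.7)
The plan is to re-interpret $kU$-modules as finite torsion modules over a complete discrete valuation ring, and then invoke Macdonald's theory of Hall polynomials.

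\textbf{Setup.} Since $\operatorname{char} k = p$, in $kU$ one has $u^{p^a} - 1 = (u-1)^{p^a}$, so the substitution $t := u-1$ gives an isomorphism $kU \cong k[t]/(t^{p^a})$. Consequently finitely generated $kU$-modules coincide with finite $k[[t]]$-modules annihilated by $t^{p^a}$, where $k[[t]]$ is a complete DVR with residue field $k$. Under this identification, the unique indecomposable $kU$-module $I_\ell$ of $k$-dimension $\ell$ from Lemma \ref{ModularCyclicModules} corresponds to $k[t]/(t^\ell)$, which has type $(\ell)$ as a torsion $k[[t]]$-module. Hence the partition associated to $M$ in the sense of Definition \ref{partitions} is precisely the classical type of $M$ viewed as a torsion $k[[t]]$-module, and analogously for $\mu$ and $\nu$ attached to $V$ and $Q$.

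\textbf{Applying Hall polynomials.} By Hall's classical counting theorem \cite[II.(4.1)]{MacDonald}, the number of submodules $\tilde V \leq M$ with $\tilde V \cong V$ and $M/\tilde V \cong Q$ depends only on the residue field cardinality $|k|$ and on the three partitions, and equals the value $g^{\lambda}_{\mu\nu}(|k|)$ of the Hall polynomial $g^{\lambda}_{\mu\nu} \in \mathbb{Z}[q]$. Macdonald's main structural result \cite[II.(4.3)]{MacDonald} then asserts that $g^{\lambda}_{\mu\nu}$ has leading coefficient equal to the Littlewood--Richardson coefficient $c^{\lambda}_{\mu,\nu}$, and that $g^{\lambda}_{\mu\nu}$ is identically zero precisely when $c^{\lambda}_{\mu,\nu} = 0$. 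Since $g^{\lambda}_{\mu\nu}(|k|)$ counts a finite set and is thus nonnegative, positivity at $q = |k|$ is equivalent to $g^{\lambda}_{\mu\nu}$ being a nonzero polynomial, which in turn is equivalent to $c^{\lambda}_{\mu,\nu} \neq 0$. This yields exactly the claimed equivalence.

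\textbf{Expected difficulty.} Essentially all of the substantive work is done in Macdonald \cite[Chapter II]{MacDonald}; the role of this proof is to set up the dictionary between $kU$-modules and torsion $k[[t]]$-modules, which is immediate from Lemma \ref{ModularCyclicModules}. The one slightly delicate point I would pay attention to is extracting from Macdonald both the leading-coefficient statement and the vanishing-versus-nonvanishing dichotomy for the Hall polynomial, i.e. that the polynomial $g^{\lambda}_{\mu\nu}$ is positive at every prime power $q$ as soon as $c^{\lambda}_{\mu,\nu}>0$, but this is classical and requires no further representation-theoretic input.
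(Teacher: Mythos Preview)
Your argument is correct and follows the same strategy as the paper: translate the $kU$-module problem into one about finite torsion modules over a discrete valuation ring and then invoke \cite[II, 4.3]{MacDonald}. The only difference lies in the choice of DVR. You take $k[[t]]$ with $t=u-1$, using the ring isomorphism $kU\cong k[t]/(t^{p^a})$ directly; the paper instead sets $\mathfrak{o}=\mathbb{Z}_p[\xi]$ with $\xi$ a primitive $p^{a+1}$-th root of unity and equips $M$ with an $\mathfrak{o}$-module structure by letting $\xi$ act as the Jordan matrix of $u$. Your choice is the more natural one and sidesteps a small wrinkle in the paper's construction when $k\neq\mathbb{F}_p$ (there the $\mathfrak{o}$-action only records the $\mathbb{F}_p$-linear structure, so one must still observe that the existence question is insensitive to the ground field). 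The ``delicate point'' you flag---that nonvanishing of the Hall polynomial $g^{\lambda}_{\mu\nu}$ as a polynomial forces $g^{\lambda}_{\mu\nu}(q)>0$ at every prime power $q$---is likewise glossed over in the paper, which simply cites \cite[II, 4.3]{MacDonald}; it is indeed obtainable from the finer material in Macdonald's Chapter~II, so neither proof has a genuine gap here.
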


\begin{remark}\label{prop:symmetrie_of_lrc}
We give the combinatorial definition relevant for us and list facts about the Littlewood-Richardson coefficient $c^\lambda_{\mu, \nu}$.
\begin{itemize}
\item[a)] \cite[I, (9.2)]{MacDonald} The Littlewood-Richardson coefficient $c^\lambda_{\mu \nu}$ may be computed combinatorially in the following way: Let $Y'$ be a Young diagram corresponding to the partition $\lambda = (\lambda_1,...,\lambda_n)$, i.e. a diagram containing $\lambda_1$ empty boxes in the first row, $\lambda_2$ empty boxes in the second row etc. Let $\mu = (\mu_1,...,\mu_m)$ be a subpartition of $\lambda$, i.e. $m \leq n$ and $\mu_i \leq \lambda_i$ for $1 \leq i \leq m$. Then the skew diagram $Y$ corresponding to $\lambda / \mu$ is obtained from $Y'$ by deleting the left $\mu_1$ boxes in the first row, the left $\mu_2$ boxes in the second row etc. 

By putting entries, in our case positive integers, into the boxes of $Y$ it becomes a skew tableau. A skew tableau is called semistandard, if reading a row from left to right the entries do not decrease and reading a column from top to bottom the entries do always increase. A word $w$ of length $r$ in the alphabet of positive integers has the lattice property if for any $s \leq r$ the first $s$ letters of $w$ contain at least as many $1$'s as $2$'s, as many $2$'s as $3$'s etc. Let $\nu = (\nu_1,...,\nu_k)$. Then $c^\lambda_{\mu \nu}$ is the number of semistandard skew tableaus $Y$ of the form $\lambda/\mu$ containing $\nu_1$ times the entry $1$, $\nu_2$ times the entry $2$ etc. such that the word obtained by reading the rows of $Y$ from right to left and from top to bottom has the lattice property.
\item[b)] By definition, the Littlewood-Richardson coefficient $c^\lambda_{\mu \nu}$ is symmetric in $\mu$ and $\nu$, i.e. $c^\lambda_{\mu \nu} = c^\lambda_{\nu \mu}$ \cite[I, 9]{MacDonald}. In the notation of Theorem \ref{LRTheorem} this implies that $M$ has a submodule $\tilde{V}$ isomorphic to $V$ such that $M/\tilde{V} \cong Q$ if and only if $M$ has a submodule $\tilde{Q}$ isomorphic to $Q$ such that $M/\tilde{Q} \cong V$.
\end{itemize}
\end{remark}

\begin{proof}[Proof of Theorem \ref{LRTheorem}]
The idea is to reformulate the question for $M$ into an equivalent question for a finite module of a discrete valuation ring. Once this is done the result follows from \cite[II, 4.3]{MacDonald}.\\
Let $\lambda = (\lambda_1,...,\lambda_s)$. Let $J_i$ denote a Jordan block of size $i$ over $k$ for the eigenvalue $1$. Note that the order of such a Jordan block is $p^\alpha$ where $p^{\alpha - 1} < i \leq p^\alpha$. Then $M$ may be viewed as an $k$ vector space of dimension $\lambda_1 + ... + \lambda_s$ on which $u$ acts as $A = {\text{diag}}(J_{\lambda_1},...,J_{\lambda_s})$. Let $\xi$ be a primitive $p^{a+1}$-th root of unity and set $\mathfrak{o} = \mathbb{Z}_p[\xi]$. Let $\pi: \mathbb{Z}_p \rightarrow k$ be the natural projection. We can equip $M$ with the structure of an finite $\mathfrak{o}$-module by letting an element $x \in \mathbb{Z}_p$ act as scalar multiplication by $\pi(x)$ and $\xi$ act by multiplication by $A$. That is possible since $A$ satisfies the projection of the minimal polynomial of $\xi$ over $\mathbb{Z}_p$ onto $k$, this projection equals $(X-1)^{p^a(p-1)}$ in $k[X]$. This $\mathfrak{o}$-module clearly has a submodule isomorphic to $V$ whose quotient is isomorphic to $Q$ if and only if $M$ as a $kU$-module has such a submodule. Thus the theorem follows from \cite[II, 4.3]{MacDonald}. 
\end{proof}

\section{Generic Results}\label{sec:Generic}
We fix some notation for this section. $p$ will denote an odd prime and $q$ a prime different from $p$. For a character $\chi$ of $G$ we denote by $\chi'$ the restriction of $\chi$ to the $p$-regular classes of $G$, i.e. the $p$-Brauer character corresponding to $\chi$. Denote by $\mathbb{Z}_p[\chi]$ the smallest extension of $\mathbb{Z}_p$ containing all character values of $\chi$. We denote by $\text{Irr}(G)$ the complex irreducible characters of $G$. By $\mathbf{1}$ we denote the trivial character of $G$.

For a local ring $R$ we denote by $\bar{.}$ the reduction modulo the maximal ideal of $R$ for both $R$ and lattices of group rings over $R$. We set $k = \bar{R}$.

\begin{lemma}\label{prop:AltesOmnibusLemma}
Let $\chi, \psi \in \operatorname{Irr}(G)$ such that $\psi' = \mathbf{1}' + \chi'$ and  $\chi'$ is an irreducible Brauer character. Moreover assume that $\mathbb{Z}_p[\chi]$ and $\mathbb{Z}_p[\psi]$ are unramified over $\mathbb{Z}_p$. Then for $u \in \mathrm{V}(\mathbb{Z}G)$ of order $pq$ we have
\[\chi(u) - \psi(u) = \chi(u^q) - \psi(u^q). \]
\end{lemma}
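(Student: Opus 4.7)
The plan is to apply the lattice method of this section. Let $R$ denote the ring of integers of a finite unramified extension of $\mathbb{Q}_p$ large enough both to contain a primitive $q$-th root of unity $\zeta$ and to realize the representations affording $\chi$ and $\psi$. Such an $R$ exists by Remark~\ref{LatticeRemarks}~c) combined with the unramifiedness of $\mathbb{Z}_p[\chi]$ and $\mathbb{Z}_p[\psi]$, since $p \neq q$ implies that adjoining $\zeta$ preserves unramifiedness. Let $L_\chi$ and $L_\psi$ be corresponding $RG$-lattices, which we view as $R\langle u\rangle$-lattices by restriction.

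The first step is Proposition~\ref{LatticeProposition} with $p^a = p$ and $m = q$, which gives decompositions
\[
L_\chi \cong \bigoplus_{i=1}^{q} L_\chi^{\zeta^i}, \qquad L_\psi \cong \bigoplus_{i=1}^{q} L_\psi^{\zeta^i},
\]
on each of whose summands $u^p$ acts as the scalar $\zeta^{ip}$. Setting $y_i := \zeta^{-i}u$ yields an $R$-linear endomorphism of $L_\bullet^{\zeta^i}$ whose $p$-th power is the identity, so $L_\bullet^{\zeta^i}$ becomes a lattice for the group algebra of a cyclic group of order dividing~$p$. Since $p$ is unramified in $R$, Lemma~\ref{prop:rank_op_RCp_lattices} classifies each such lattice as a direct sum of the three indecomposable Gudivok lattices with multiplicities $a_i^{(\bullet)}, b_i^{(\bullet)}, c_i^{(\bullet)}$. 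A direct trace computation using $u = \zeta^i y_i$, $u^q = y_i^q$ (since $\zeta^q = 1$), and the fact that $\alpha \mapsto \alpha^q$ permutes the nontrivial $p$-th roots of unity then yields
\[
\chi(u) - \chi(u^q) \;=\; \sum_{i=1}^{q-1}(\zeta^i - 1)\bigl(a_i^{(\chi)} - b_i^{(\chi)}\bigr),
\]
with the $i = q$ contribution vanishing because $\zeta^q = 1$, and the analogous formula for $\psi$.

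The crux is to match the multiplicities for $\chi$ and $\psi$ in the range $1 \le i \le q-1$. The hypotheses say that $\bar L_\chi$ is the simple $kG$-module $S$ with Brauer character $\chi'$ and that $\bar L_\psi$ fits into a short exact sequence of $kG$-modules $0 \to N \to \bar L_\psi \to Q \to 0$ with $\{N, Q\} = \{k, S\}$. Since $u^p$ has order $q$ coprime to $p = \operatorname{char} k$, it acts semisimply on every $kG$-module, so $\bar L_\bullet^{\zeta^i}$ agrees with the $\bar\zeta^{ip}$-eigenspace of $u^p$ on $\bar L_\bullet$. The trivial composition factor $k$ lives entirely in the eigenvalue-$1$ part, i.e.\ in the summand with $i = q$, so the short exact sequence restricts on every other eigenspace to an isomorphism $\bar L_\chi^{\zeta^i} \cong \bar L_\psi^{\zeta^i}$ of $k\langle u\rangle$-modules. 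As the three Gudivok lattices have pairwise non-isomorphic indecomposable reductions, Krull--Schmidt on the reductions forces $a_i^{(\chi)} = a_i^{(\psi)}$ and $b_i^{(\chi)} = b_i^{(\psi)}$ for every $i \neq q$. Plugging this into the trace formula gives $\chi(u) - \chi(u^q) = \psi(u) - \psi(u^q)$, which rearranges to the claim.

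The main conceptual obstacle is the passage from a modular equality of summands to a matching of the integral Gudivok multiplicities. This is exactly where the unramifiedness of $\mathbb{Z}_p[\chi]$ and $\mathbb{Z}_p[\psi]$ enters: without it, the classification of $R\langle y_i\rangle$-lattices becomes much richer and the Krull--Schmidt argument on reductions would no longer pin down the multiplicities from the $k\langle u\rangle$-module isomorphism $\bar L_\chi^{\zeta^i} \cong \bar L_\psi^{\zeta^i}$.
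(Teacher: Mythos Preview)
Your proof is correct and follows essentially the same approach as the paper's own proof: both decompose $L_\chi$ and $L_\psi$ via Proposition~\ref{LatticeProposition}, invoke the Gudivok classification of Lemma~\ref{prop:rank_op_RCp_lattices} on each summand to see that the eigenvalues away from the $i=q$ part are determined by the reductions, and observe that the trivial composition factor of $\bar L_\psi$ sits entirely in the $i=q$ part so that $\bar L_\chi^{\zeta^i}\cong\bar L_\psi^{\zeta^i}$ for $i\neq q$. The paper packages the conclusion as an equality of eigenvalue multisets $X_\chi=X_\psi$ (the non-$p$-th-root eigenvalues) rather than your explicit matching of Gudivok multiplicities $a_i^{(\chi)}=a_i^{(\psi)}$, $b_i^{(\chi)}=b_i^{(\psi)}$, but this is a purely cosmetic difference.
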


\begin{proof} Let $R$ be an unramified extension of $\mathbb{Z}_p$, containing a primitive $q$-th root of unity $\zeta_q$, over which the characters $\chi$ and $\psi$ may be realized. Such an $R$ exists by Remark \ref{LatticeRemarks}. Let $D_\chi$ and $D_\psi$ be $R$-representations of $G$ affording $\chi$ and $\psi$, respectively, and let $L_\chi$ and $L_\psi$ be corresponding $RG$-lattices. Then, by assumption, the composition factors of $\bar{L}_\psi$ as $kG$-module are $\bar{L}_{\chi}$ and the trivial module, each with multiplicity one.  

Then, by Proposition \ref{LatticeProposition}, as $k\langle u \rangle$-modules we have 
\[\bar{L}_\chi \cong \bar{L}_\chi^{\zeta_1} \oplus \bar{L}_\chi^{\zeta_q^1} \oplus ... \oplus \bar{L}_\chi^{\zeta_q^{q-1}} \quad \text{and} \quad \bar{L}_\psi \cong \bar{L}_\psi^{\zeta_1} \oplus \bar{L}_\psi^{\zeta_q^1} \oplus ... \oplus \bar{L}_\psi^{\zeta_q^{q-1}}. \]
Since the trivial $kG$-module has to be a submodule or quotient of $\bar{L}_\psi^1$ we thus have 
\begin{align}\label{IsoChiPsi}
\bar{L}_\chi^{\zeta_q^1} \oplus ... \oplus \bar{L}_\chi^{\zeta_q^{q-1}} \cong \bar{L}_\psi^{\zeta_q^1} \oplus ... \oplus \bar{L}_\psi^{\zeta_q^{q-1}}. 
\end{align}
 
Assume that $D_\chi(u) \sim (Y_\chi, X_\chi)$ and $D_\psi(u) \sim (Y_\psi, X_\psi)$, where $Y_\chi$ and $Y_\psi$ are the eigenvalues of $D_\chi(u)$ and $D_\psi(u)$ that are $p$-th roots of unity and $X_\chi$ and $X_\psi$ are the eigenvalues not being $p$-th roots of unity (i.e.\ primitive $pq$-th and primitive $q$-th roots of unity).
Since $p$ is unramified in $R$ by Lemma \ref{prop:rank_op_RCp_lattices}, the eigenvalues $X_\chi$ are determined by the isomorphism type of $\bar{L}_\chi^{\zeta_q^1} \oplus ... \oplus \bar{L}_\chi^{\zeta_q^{q-1}}$, the same holds for $\psi$ and thus $X_\chi = X_\psi$ by \eqref{IsoChiPsi}.
Hence 
\begin{equation} \chi(u) = \sum_{x \in X_\chi} x + \sum_{y \in Y_\chi} y \quad \text{and}\quad \psi(u) = \sum_{x \in X_\chi} x + \sum_{y \in Y_\psi} y \label{eq:chiu_psiu}. \end{equation}
As $p$ is unramified in $R$, each primitive $p$-th root of unity is contained in $Y_\chi$ with the same multiplicity, thus $\sum_{y \in Y_\chi} y = \sum_{y \in Y_\chi} y^q$. The same is true when $Y_\chi$ is replaced by $Y_\psi$. So we have 
\begin{equation} \chi(u^q) = \sum_{x \in X_\chi} x^q + \sum_{y \in Y_\chi} y \quad \text{and}\quad \psi(u^q) = \sum_{x \in X_\chi} x^q + \sum_{y \in Y_\psi} y. \label{eq:chiuq_psiuq} \end{equation}
Subtracting the corresponding expressions in \eqref{eq:chiu_psiu} and \eqref{eq:chiuq_psiuq} yields the result.  
\end{proof}

The following describes a situation we frequently encounter in our investigations.
\begin{proposition}\label{prop:Omnibusproposition}
Assume $G$ contains no elements of order $pq$ and $p^2$. Let $\chi, \psi \in \operatorname{Irr}(G)$ such that $\psi' = \mathbf{1}' + \chi'$. Assume moreover that $\chi'$ is irreducible and $\chi$ and $\psi$ are  constant and integral on elements of order $p$. Then $\mathrm{V}(\mathbb{Z}G)$ contains no unit of order $pq$.
\end{proposition}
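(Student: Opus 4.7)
The plan is to assume, for contradiction, that some $u \in \V(\ZZ G)$ has order $pq$, and to extract a numerical impossibility by applying \cref{prop:AltesOmnibusLemma} and evaluating partial augmentations.

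First I would check that the integrality of $\chi$ and $\psi$ on the $p$-elements, combined with the absence of elements of order $p^2$ in $G$ (so that the $p$-part of the exponent of $G$ equals $p$), guarantees that $\ZZ_p[\chi]$ and $\ZZ_p[\psi]$ are unramified extensions of $\ZZ_p$. This lets me apply \cref{prop:AltesOmnibusLemma} and obtain
\[
\chi(u) - \psi(u) \;=\; \chi(u^q) - \psi(u^q).
\]

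Next I would rewrite both sides in terms of partial augmentations. By \cref{prop:basic} and the Berman--Higman theorem, since $G$ has no element of order $pq$, the nonzero partial augmentations of $u$ sit on conjugacy classes of order $p$ or $q$; set $\alpha = \tilde\varepsilon_p(u)$ and $\beta = \tilde\varepsilon_q(u)$, so $\alpha + \beta = 1$. Let $c$ and $d$ be the common integer values of $\chi$ and $\psi$ on the $p$-elements. The relation $\psi' = \mathbf{1}' + \chi'$ gives $\psi(D) - \chi(D) = 1$ on every $p$-regular (in particular every order-$q$) class $D$, so the left-hand side equals $(c-d)\alpha - \beta$. For the right-hand side, $u^q$ has order $p$, its partial augmentations are supported on $p$-classes and sum to $1$, hence $\chi(u^q) = c$ and $\psi(u^q) = d$, giving $c - d$. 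Substituting $\beta = 1 - \alpha$ produces the clean relation
\[
(\alpha - 1)(d - c - 1) \;=\; 0.
\]
Wagner's congruence (\cref{prop:Wagner}) forces $\alpha \equiv 0 \pmod p$, so $\alpha \neq 1$, and hence $d = c + 1$ is forced.

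The main obstacle is then ruling out the remaining case $d = c + 1$. My plan here is to invoke the sharper statement actually proved inside \cref{prop:AltesOmnibusLemma}, namely that the non-$p$ eigenvalue multisets of $D_\chi(u)$ and $D_\psi(u)$ coincide. Combined with the irreducibility of $\bar L_\chi$ (from $\chi'$ irreducible), taking $u^p$-invariants of the $kG$-level short exact sequence relating $\bar L_\chi$, $\bar L_\psi$ and the trivial module yields a short exact sequence
\[
0 \to \bar L_\chi^{\zeta_q^0} \to \bar L_\psi^{\zeta_q^0} \to k \to 0
\]
of $k\langle u\rangle$-modules. \cref{LRTheorem} then forces the partition of $\bar L_\psi^{\zeta_q^0}$ to be obtained from that of $\bar L_\chi^{\zeta_q^0}$ by adding a single box, while \cref{prop:rank_op_RCp_lattices} restricts the admissible part-sizes of both partitions to $\{1, p-1, p\}$. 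I expect the contradiction to emerge from combining this combinatorial rigidity with the non-negativity of the indecomposable multiplicities and the second Wagner congruence $\alpha \equiv 1 \pmod q$.
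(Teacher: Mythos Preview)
Your argument through the identity $(\alpha-1)(d-c-1)=0$ and the conclusion $d=c+1$ is correct and coincides with the paper's proof.

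The gap is in the final step. The paper disposes of the case $d=c+1$ by a direct character-theoretic observation that you have overlooked: once $\psi(g)=\chi(g)+1$ for every element $g$ of order $p$, and since $\psi'=\mathbf{1}'+\chi'$ already gives $\psi=\chi+1$ on every $p$-regular class, the absence of elements of order $p^2$ lets one conclude $\psi=\chi+\mathbf{1}$ as class functions on all of $G$, contradicting $\psi\in\operatorname{Irr}(G)$. This is the only place in the whole proof where the irreducibility of $\psi$ is used, and your proposed final step never invokes it.

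Your lattice route, as sketched, does not close. The constraint you isolate---that the partition attached to $\bar{L}_\psi^{\zeta_q^0}$ is obtained from that of $\bar{L}_\chi^{\zeta_q^0}$ by adding a single box, with all parts lying in $\{1,\,p-1,\,p\}$---is trivially satisfiable: one may append a new part of size $1$, or promote an existing part of size $p-1$ to size $p$. So no Littlewood--Richardson obstruction arises, and there is no visible mechanism by which the congruence $\alpha\equiv 1\pmod q$ would feed back into this picture. Since at this stage you are using only $\psi'=\mathbf{1}'+\chi'$, the irreducibility of $\chi'$, and the existence of $u$, but not the irreducibility of $\psi$, you are in effect trying to extract a contradiction from a set of hypotheses that is consistent.
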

\begin{proof}
Let $u \in \mathrm{V}(\mathbb{Z}G)$ be a unit of order $pq$ and $g \in G$ some fixed element of order $p$.  Let $q_1, ..., q_k$ be representatives of the conjugacy classes of elements of order $q$ in $G$. Since $\psi(q_i) = \chi(q_i) + 1$ for all $i$ we have
\begin{align*}& \chi(u) = \tilde{\varepsilon}_p(u)\chi(g) + \sum_{i = 1}^k \varepsilon_{q_i}(u)\chi(q_i) \\ \text{and} \quad &\psi(u) = \tilde{\varepsilon}_p(u)\psi(g) + \sum_{i = 1}^k \varepsilon_{q_i}(u)(\chi(q_i) + 1).\end{align*}
Moreover
\[\chi(u^q) - \psi(u^q) = \chi(g) - \psi(g). \]
By the assumptions we can apply Lemma \ref{prop:AltesOmnibusLemma}. Hence, using $\tilde{\varepsilon}_p(u) + \tilde{\varepsilon}_q(u) = 1$, we obtain
\begin{align*}
 \chi(g) - \psi(g) = \tilde{\varepsilon}_p(u)(\chi(g) - \psi(g)) - \tilde{\varepsilon}_q(u) \quad \text{i.e.} \quad \tilde{\varepsilon}_q(u)(\chi(g) - \psi(g) + 1) = 0. 
\end{align*}
If $\tilde{\varepsilon}_q(u) = 0$, then $\tilde{\varepsilon}_p(u) = 1$, contradicting Lemma \ref{prop:Wagner}. Thus
\[\psi(g) = \chi(g) + 1. \]
If $h \in G$ is an element of order $pm$, where $p \nmid m$, the values $\chi(h)$ and $\psi(h)$ are determined by the values on $h^p$ and $h^m$. As there are no elements of order $p^2$ in $G$, we obtain $\psi = \chi + \mathbf{1}$, contradicting the irreducibility of $\psi$.
\end{proof}

Proposition \ref{prop:Omnibusproposition} allows us to prove that if the Sylow $3$-subgroup is of order $3$ then the prime graph of $G$ and $\mathrm{V}(\mathbb{Z}G)$ graphs are equal around the vertex $3$ (Theorem D).
Note that if the Sylow $2$-subgroup of $G$ is of order $2$ then these graphs are even globally equal. This follows from Burnside's $p$-complement Theorem \cite[Kapitel IV, Hauptsatz 2.6]{Huppert1} in conjunction with the Feit-Thompson Theorem and Kimmerle's \cite[Proposition 4.3]{KimmiPQ}.  

 \begin{proof}[Proof of Theorem \ref{Prime3}] Let $P$ be a Sylow $3$-subgroup of $G$, so $P$ is a group of order $3$. By Lemma~\ref{prop:basic} there is no element of order $9$ in $\V(\ZZ G)$, so we can assume $q \not= 3$. 
 
If $N_G(P) = C_G(P)$ then $P$ has a normal $3$-complement in $G$, by Burnside's normal $p$-complement Theorem \cite[Kapitel IV, Hauptsatz 2.6]{Huppert1}. In this case \cite[Proposition~2.2]{KonovalovKimmiComp} shows that there is a normalized unit of order $3q$ in $\ZZ G$ if and only if there is an element of that order in $G$.
 
Now assume $N_G(P) \not= C_G(P)$, so the two non-trivial elements of $P$ are conjugate in $G$ and $[N_G(P):C_G(P)] = 2$. As the principal $3$-block $B_0$ of $G$ has maximal defect, its defect group $D$ is cyclic of order $3$. By Brauer's theory of blocks with defect one, the decomposition matrix of $B_0$ looks as in Table~\ref{tab:decompNavarro} (see \cite[Theorem~11.1]{Navarro}).
\setcounter{table}{0}
 \begin{table}[h]\centering
\caption{Decomposition matrix at $3$ of $B_0$ in case $N_G(P) \neq C_G(P)$}\label{tab:decompNavarro}
\begin{tabular}{l c c }\hline
{ } & $\varphi_{1}$  & $\chi'$ \\ \hline\hline
$\mathbf{1}$ & 1 & $\cdot$ \\
$\chi$ & $\cdot$ & 1 \\
$\psi$ & 1 & 1  \\ \hline
\end{tabular}
\end{table}
In this case $G$ has only one conjugacy class of elements of order $3$ and this class is rational, so all characters of $G$ are integral and constant on elements of order $3$. Recall that $G$ does not contain elements of order $3^2$. So in case there is no element of order $3q$ in $G$, we are in the setting of Proposition \ref{prop:Omnibusproposition} and we conclude that there is no normalized unit of order $3q$ in $\ZZ G$. 
\end{proof}

\section{Applications}
Theorem \ref{Prime3} immediately applies to infinite families relevant for us.
\begin{proposition}
Let $G = \PSL(2,q)$ for some prime power $q \geq 4$ such that $9$ does not divide the order of $G$ and let $p$ be a prime. Then there is a unit of order $3p$ in $\mathrm{V}(\mathbb{Z}G)$ if and only if there is an element of order $3p$ in $G$.
\end{proposition}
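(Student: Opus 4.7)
The plan is to invoke Theorem~\ref{Prime3} directly, since the hypothesis of this proposition is precisely what Theorem~\ref{Prime3} requires. First I would set $G = \PSL(2,q)$ and note that the assumption ``$9 \nmid |G|$'' coincides verbatim with the hypothesis ``the order of $G$ is not divisible by $9$'' in Theorem~\ref{Prime3}. Then, matching the prime $p$ of this proposition with the prime ``$q$'' appearing in the statement of Theorem~\ref{Prime3} (the notation clash is harmless, since the $q$ there is just a name for an arbitrary prime and is unrelated to the prime power defining $\PSL(2,q)$), the conclusion follows immediately.

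The one point worth addressing explicitly is the degenerate case $p = 3$. Here $3p = 9$, and since $9$ does not divide the exponent of $G$, Lemma~\ref{prop:basic}(1) shows that $\V(\ZZ G)$ contains no unit of order $9$; by hypothesis $G$ contains no element of order $9$ either, so both sides of the equivalence are vacuously false and there is nothing to prove. For $p \neq 3$ the statement is exactly Theorem~\ref{Prime3}.

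There is no real obstacle: this proposition is essentially an advertised specialization of Theorem~\ref{Prime3} to the $\PSL(2,q)$ setting, where it becomes useful because it covers the vertex $3$ of the prime graph uniformly across all $\PSL(2,q)$ whose order is coprime to $9$ (for instance, $\PSL(2,2^f)$ for all $f \geq 2$, and $\PSL(2,p)$ for primes $p \not\equiv \pm 1 \pmod 9$). The substantive content lies entirely in the proof of Theorem~\ref{Prime3} — Burnside's normal $p$-complement theorem, Brauer's theory of blocks with cyclic defect group of order $p$, and Proposition~\ref{prop:Omnibusproposition} — and nothing new is needed here.
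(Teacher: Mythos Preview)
Your proposal is correct and matches the paper's own proof, which simply observes that the Sylow $3$-subgroup of $G$ has order $3$ (since $3$ divides $|G|$ for $q \geq 4$ but $9$ does not by hypothesis) and then invokes Theorem~\ref{Prime3}. Your explicit handling of the degenerate case $p=3$ is a harmless addition; the paper folds this into the proof of Theorem~\ref{Prime3} itself.
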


\begin{proof} The Sylow $3$-subgroup of $G$ is of order $3$, since the order of $G$ is divisible by $3$ but not by $9$. Hence the claim follows from Theorem \ref{Prime3}.
\end{proof}

Using \cite[Proposition 1.2, Lemma 4.1]{HeLPTeil} this implies the Prime Graph Question for a possibly infinite series of groups. Whether this really is an infinite series is an open number theoretical question \cite[Problem 13.65]{Kourovka}.
\begin{corollary} \label{PQPSL}
Let $G$ be a $4$-primary group isomorphic to some $\PSL(2,2^f)$. Then the Prime Graph Question holds for $G$.
\end{corollary}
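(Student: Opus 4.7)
The plan is to reduce the Prime Graph Question for $G = \PSL(2,2^f)$ to two parts: the edges of the prime graph of $\V(\ZZ G)$ incident to the vertex $3$, which are handled by the preceding Proposition, and all remaining edges, which are handled by \cite[Proposition 1.2, Lemma 4.1]{HeLPTeil} of the predecessor paper.

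First I would verify that the $4$-primary hypothesis forces $9 \nmid |G|$, which is the side condition needed to apply the preceding Proposition. Since $3 \mid 2^{2f}-1$ for every $f \geq 1$, the prime $3$ is automatically among the divisors of $|G|$; the stronger condition $9 \mid |G|$ is equivalent (by the order of $2$ modulo $9$) to $3 \mid f$. If $3 \mid f$ then $2^3 - 1 = 7$ already divides $|G|$, and a short case check on $2^f \pm 1$ (essentially Zsygmondy-type considerations for $f > 3$) shows that $|G|$ then has either exactly three distinct prime divisors (the case $f = 3$) or at least five of them, but never exactly four. Hence the $4$-primary hypothesis gives $3 \nmid f$, so $9 \nmid |G|$.

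Second, with $9 \nmid |G|$ in hand, the preceding Proposition immediately covers every edge of the prime graph of $\V(\ZZ G)$ that is incident to the vertex $3$: for each prime $p$ dividing $|G|$, $\V(\ZZ G)$ contains a unit of order $3p$ if and only if $G$ does. The remaining pairs of distinct primes dividing $|G|$ are either of the form $(2,r)$ with $r$ an odd prime different from $3$ (no such element can occur in $G$, since the Sylow $2$-subgroup of $\PSL(2,2^f)$ is a TI subgroup), or of the form $(r,s)$ with $r,s$ distinct odd primes different from $3$; both families are exactly those treated in \cite[Proposition 1.2, Lemma 4.1]{HeLPTeil}.

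I expect the only substantive step is the arithmetic check in the first paragraph; it will be the main (but still routine) obstacle, because without it the hypothesis of the preceding Proposition is not available uniformly across the series. Once $9 \nmid |G|$ is established under the $4$-primary assumption, the corollary follows by direct assembly of the preceding Proposition with the cited results of the predecessor paper.
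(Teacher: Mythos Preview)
Your proposal is correct and follows essentially the same approach as the paper: combine the preceding Proposition (handling edges at the vertex $3$) with \cite[Proposition 1.2, Lemma 4.1]{HeLPTeil} (handling the remaining edges). The paper's own proof is the single sentence preceding the corollary, so you are in fact more explicit than the paper---in particular, you supply the arithmetic check that the $4$-primary hypothesis forces $9 \nmid |G|$, which the paper leaves to the reader (or to the cited references). Your Zsygmondy argument for this step is sound; just note that the case $f=6$ must be handled separately since it is the exceptional case of Zsygmondy's theorem, but there a direct factorization of $2^6\pm 1$ already yields five prime divisors.
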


\begin{proposition}\label{prop:PSU_3_q_3p}
Let $G = \PSU(3,q)$ for some prime power $q$ such that $3 \mid q - 1$ and $9 \nmid q - 1$. Let $p$ be a prime. Then there is a unit of order $3p$ in $\mathrm{V}(\mathbb{Z}G)$ if and only if there is an element of order $3p$ in $G$.
\end{proposition}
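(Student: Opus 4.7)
The plan is to reduce the statement to Theorem \ref{Prime3} by showing that under the given hypotheses the order of $G = \PSU(3,q)$ is not divisible by $9$. Once that is established, Theorem \ref{Prime3} applied with the prime $p$ (in place of the prime called $q$ in Theorem \ref{Prime3}) delivers the equivalence immediately.

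First I would recall the standard order formula
\[
|\PSU(3,q)| \;=\; \frac{1}{\gcd(3,q+1)}\, q^3(q^2-1)(q^3+1) \;=\; \frac{q^3(q-1)(q+1)^2(q^2-q+1)}{\gcd(3,q+1)}.
\]
Now I would analyse each factor modulo $3$ under the assumption $q \equiv 1 \pmod 3$. Since $q \equiv 1 \pmod 3$ we automatically have $q \neq 3$, so $q$ is coprime to $3$; further $q+1 \equiv 2 \pmod 3$, which gives $\gcd(3,q+1)=1$ and shows that $(q+1)^2$ is coprime to $3$; and $q^2-q+1 \equiv 1-1+1 \equiv 1 \pmod 3$, so it too is coprime to $3$. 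Hence every factor except $q-1$ contributes no powers of $3$.

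By the hypothesis $3 \mid q-1$ and $9 \nmid q-1$, so exactly one factor of $3$ divides $q-1$. Combining these observations, the $3$-part of $|G|$ equals $3$, and in particular $9 \nmid |G|$. I would then invoke Theorem \ref{Prime3} with the prime $p$ of the proposition to conclude that $\mathrm V(\ZZ G)$ has a unit of order $3p$ if and only if $G$ has an element of order $3p$.

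I do not foresee any real obstacle: the argument is entirely an order-of-magnitude bookkeeping plus a citation of Theorem \ref{Prime3}. The only mild subtlety is remembering to rule out $q=3$ (which is automatic from $3 \mid q-1$) and to notice that $\gcd(3,q+1)=1$ under the hypothesis, so the division by this gcd does not threaten to introduce a hidden power of $3$.
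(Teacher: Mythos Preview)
Your proof is correct and follows essentially the same approach as the paper: compute the order of $\PSU(3,q)$, observe that under the hypotheses the $3$-part of $|G|$ is exactly $3$, and then invoke Theorem~\ref{Prime3}. The paper's version is terser (it simply states the order and asserts the Sylow $3$-subgroup has order $3$), while you spell out the factor-by-factor analysis modulo $3$ and the $\gcd(3,q+1)=1$ observation, but there is no substantive difference.
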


\begin{proof} The order of $G$ is $q^3(q +1)^2(q-1)(q^2 - q + 1)$ \cite[p.\ 564]{GeckPSU}. So the assumption implies that the Sylow $3$-subgroup is of order $3$ and hence the claim follows from Theorem \ref{Prime3}.
\end{proof}

\begin{lemma}\label{prop:PSL_3_5_PSU_3_7} There is no unit of order $15$ in $\ZZ \Aut(\PSL(3,5))$. There is no unit of order $21$ in $\ZZ \Aut(\PSU(3,7))$.
\end{lemma}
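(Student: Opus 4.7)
The plan is to apply Theorem~\ref{Prime3} to each of the two automorphism groups in turn. Both assertions will follow once we check that (i) $9$ does not divide the order of the ambient automorphism group and (ii) the automorphism group contains no element of the relevant order (namely $15$, respectively $21$).

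For $\Aut(\PSL(3,5))$: since $\gcd(3,5-1) = 1$, all diagonal automorphisms are inner and $\Out(\PSL(3,5)) = C_{2}$, generated by the inverse-transpose map, so $|\Aut(\PSL(3,5))| = 2^{6}\cdot 3\cdot 5^{3}\cdot 31$, and in particular $9 \nmid |\Aut(\PSL(3,5))|$. A $3$-element $g \in \PSL(3,5) = \mathrm{SL}(3,5)$ has minimal polynomial $(X-1)(X^{2}+X+1)$ over $\mathbb{F}_{5}$ (since $X^{2}+X+1$ is irreducible over $\mathbb{F}_{5}$), so $g$ is conjugate to $\operatorname{diag}(1, M)$ with $M \in \mathrm{GL}(2,5)$ of order $3$. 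A direct computation using $C_{\mathrm{GL}(2,5)}(M) \cong \mathbb{F}_{25}^{*}$ together with the determinant-one condition gives $|C_{\PSL(3,5)}(g)| = 24$. In particular $C_{\PSL(3,5)}(g)$ contains no element of order $5$, and since $[\Aut(\PSL(3,5)) : \PSL(3,5)] = 2$, neither does $C_{\Aut(\PSL(3,5))}(g)$. Hence $\Aut(\PSL(3,5))$ has no element of order $15$, and Theorem~\ref{Prime3} yields the first claim.

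For $\Aut(\PSU(3,7))$: as $\gcd(3,7+1)=1$, again $\Out(\PSU(3,7)) = C_{2}$ and $|\Aut(\PSU(3,7))| = 2^{8}\cdot 3\cdot 7^{3}\cdot 43$, so $9$ does not divide the order. Because $3 \mid q-1 = 6$, a $3$-element of $\PSU(3,7)$ is regular semisimple with centralizer a maximal torus of order $q^{2}-1 = 48$; this centralizer is coprime to $7$, and passing to $\Aut(\PSU(3,7))$ at most doubles it, so no $7$-element is introduced. Hence $\Aut(\PSU(3,7))$ has no element of order $21$, and Theorem~\ref{Prime3} gives the second claim.

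There is essentially no obstacle: the only real work is the element-order verification in each automorphism group, which is routine (and can equally be read off from GAP's character table library). No appeal to the lattice method of Section~2 is required.
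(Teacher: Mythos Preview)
Your approach is exactly that of the paper: verify that the Sylow $3$-subgroup of each automorphism group has order $3$ (equivalently, $9 \nmid |G|$) and that $G$ contains no element of the relevant order, then invoke Theorem~\ref{Prime3}. The paper simply appeals to \texttt{GAP} for these checks, whereas you carry them out by hand; your computations for $\Aut(\PSL(3,5))$ are fully correct, and for $\Aut(\PSU(3,7))$ your identification of the centralizer with the maximal torus of order $q^{2}-1$ is correct (it is the only maximal torus of $\operatorname{SU}(3,7)$ whose order, among $(q+1)^{2}=64$, $q^{2}-1=48$, $q^{2}-q+1=43$, is divisible by $3$), though you might state this justification explicitly rather than leaving it implicit.
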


\begin{proof} Using \cite{GAP} we see that the Sylow $3$-subgroups of both groups are of order $3$. Hence the claim follows from Theorem \ref{Prime3}.
\end{proof}

\begin{corollary}\label{PQPSL35}
The Prime Graph Question has an affirmative answer for almost simple groups with socle isomorphic to $\PSL(3,5)$, $\PSU(3,4)$ or $\PSU(3,7)$.
\end{corollary}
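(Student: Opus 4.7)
The plan is to combine the HeLP analysis already performed in the companion paper \cite{HeLPTeil} with the two results just established in this section. By the Kimmerle--Konovalov reduction \cite[Theorem~2.1]{KonovalovKimmiComp} it suffices to verify the Prime Graph Question for each almost simple overgroup $G$ of the three socles. These overgroups form a short explicit list in each case, determined by the subgroups of the outer automorphism group: for $\PSL(3,5)$ and $\PSU(3,7)$ only the automorphism group itself appears as a proper overgroup (since each has $\Out \cong C_2$), while for $\PSU(3,4)$ one additionally has to treat the intermediate extension inside $\Out(\PSU(3,4)) \cong C_4$.

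Next I would quote the HeLP conclusions from \cite{HeLPTeil} (in the same spirit as \cite[Proposition~1.2, Lemma~4.1]{HeLPTeil} invoked in the proof of Corollary \ref{PQPSL}) to conclude that, for every such $G$, the HeLP method rules out normalized torsion units of every order $pq$ absent from $G$, with the sole exception of orders involving the prime~$3$. Concretely, the only open orders are $15$ for the overgroups of $\PSL(3,5)$ and $\PSU(3,4)$, and $21$ for the overgroups of $\PSU(3,7)$.

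These residual orders are exactly what the preceding results of the present section dispose of. For $\PSU(3,4)$ itself, Proposition \ref{prop:PSU_3_q_3p} rules out units of order $15$; for the full automorphism groups of $\PSL(3,5)$ and $\PSU(3,7)$, Lemma \ref{prop:PSL_3_5_PSU_3_7} does the same for orders $15$ and $21$, respectively. Every remaining almost simple overgroup still has order not divisible by $9$ (since none of the three outer automorphism groups has a factor of $3$), so Theorem \ref{Prime3} applies directly and rules out the relevant units of order $3q$.

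The main obstacle will be the bookkeeping: confirming that the HeLP analysis in \cite{HeLPTeil} really does leave exactly the three residual orders listed above for each almost simple overgroup and for every missing pair of primes $(p,q)$. This is a mechanical check via \cite{HeLPPackage} rather than a conceptual hurdle, after which the three lines above finish the proof.
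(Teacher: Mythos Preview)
Your approach is essentially the paper's: quote \cite[Theorem B]{HeLPTeil} to isolate the residual orders and then kill them with Proposition~\ref{prop:PSU_3_q_3p} and Lemma~\ref{prop:PSL_3_5_PSU_3_7}. Two small corrections, though. First, the Kimmerle--Konovalov reduction is irrelevant here---the statement is already about almost simple groups, so there is nothing to reduce. Second, you misidentify the residual case for $\PSU(3,4)$: what \cite[Theorem B]{HeLPTeil} leaves open is order~$6$ (not~$15$), and only for the simple group itself, the proper overgroups being fully handled by HeLP. This does not damage your argument, since Proposition~\ref{prop:PSU_3_q_3p} (and Theorem~\ref{Prime3}) dispose of any order~$3p$ equally well, but the bookkeeping you flag as the main obstacle should actually be done correctly rather than guessed at.
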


\begin{proof} By \cite[Theorem B]{HeLPTeil} it only remains to exclude the existence of units of order $6$ in $\V(\ZZ \PSU(3,4))$ and the existence of units of order $15$ and $21$, respectively, in $\V(\ZZ G)$, where $G$ is any automorphic extension of $\PSL(3,5)$ and $\PSU(3,7)$, respectively.  This is done in Proposition \ref{prop:PSU_3_q_3p} and Lemma \ref{prop:PSL_3_5_PSU_3_7}. \end{proof}

We will proceed to handle some more groups, where the application of the method is more involved.\\

\textbf{Notation.} Let $G$ be a finite group. For the rest of this paper we use the following notation. We use characters from the \texttt{GAP} character table library \cite{CTblLib} and use these library names for them. For an ordinary character $\chi_i$ of $G$ we denote by $D_i$ a corresponding representation. Let $p$ be a fixed prime, the exact value of $p$ will always be clear from the context. Then by Remark \ref{LatticeRemarks} we can realize $D_i$ over a $p$-adically complete, discrete valuation ring $R$ which is unramified over $\mathbb{Z}_p[\chi_i]$, the $p$-adic closure of the ring of character values of $\chi_i$. In our application $\mathbb{Z}_p[\chi_i]$ will always be unramified over $\mathbb{Z}_p$.

We denote by $L_i$ an $RG$-lattice corresponding to the representation $D_i$ and by $\bar{.}$ the reduction modulo the maximal ideal of $R$ for both $R$ and $RG$-lattices. Moreover we denote by $k$ a finite field of characteristic $p$ containing the residue class field of all complete valuation rings involved and affording all necessary $p$-modular representations of $G$. For a $p$-Brauer character $\varphi_i$ of $G$ we denote by $S_i$ a corresponding $kG$-module.

Our calculations will always involve a torsion unit $u \in \mathrm{V}(\mathbb{Z}G)$. Unless stated otherwise $L_i$ is always viewed as an $R\langle u \rangle$-lattice and $S_i$ and $\bar{L}_i$ as $k\langle \bar{u} \rangle$-modules. We will moreover use the notation introduced in Proposition \ref{LatticeProposition}.

We use the following notation to indicate that certain multiplicities occur with greater multiplicity. For a diagonalizable matrix $A$ we write 
\[A \sim \left(\eigbox{m}{\delta_1, ..., \delta_k}, \eigbox{n}{\eta_1, ..., \eta_\ell}\right)\]
to indicate that the eigenvalues of $A$ are $\delta_1, ..., \delta_k$ (each with multiplicity $m$) and $\eta_1, ..., \eta_\ell$ (each with multiplicity $n$). Those boxes with eigenvalues are used to group eigenvalues ``of the same kind'' together.

\begin{lemma}
Let $G$ be an almost simple group whose socle is isomorphic to $\PSL(2,81)$. Then $\mathrm{V}(\mathbb{Z}G)$ contains no units of order $15$. 
\end{lemma}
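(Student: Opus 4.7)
The plan is to apply the lattice method at the prime $p=5$, exploiting the fact that a Sylow $5$-subgroup $P$ of $G$ is cyclic of order $5$ (since $|\PSL(2,81)|_5 = 5$ and $5 \nmid [\Aut(\PSL(2,81)):\PSL(2,81)] = 8$), so that the $5$-modular representation theory of $G$ is as tractable as possible. Suppose for contradiction that $u \in \V(\ZZ G)$ has order $15$. By Lemma \ref{prop:basic}(3) the partial augmentations of $u$, $u^3$ and $u^5$ are supported on conjugacy classes of elements of orders dividing $15$; since $G$ has no elements of order $15$, the nontrivial partial augmentations of $u$ sit on order-$3$ and order-$5$ classes, while those of $u^3$ and $u^5$ sit on order-$5$ and order-$3$ classes respectively. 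The HeLP method applied to the ordinary character table of $G$ (see \cite{HeLPPackage}) reduces the possible partial-augmentation tuples to a finite list, which forms the input to the lattice method.

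For each surviving candidate I would pick an ordinary character $\chi$ in the principal $5$-block $B_0$ of $G$ and analyse $D_\chi(u)$. Since $P$ is cyclic of order $5$, Brauer's theory of blocks with cyclic defect determines the Brauer tree of $B_0$ as a line with the trivial character at one end and a (possibly exceptional) vertex at the other, with inertial index $e = [N_G(P):C_G(P)]$; in particular the composition-factor structure of the reduction $\bar L$ of any $RG$-lattice $L$ affording $\chi$ is explicit. Realise $\chi$ over a complete unramified extension $R$ of $\Z_5$ (possible by Remark \ref{LatticeRemarks}(c)), and apply Proposition \ref{LatticeProposition} with $m=3$ to decompose
\[ L \;\cong\; L^{1} \oplus L^{\zeta_3} \oplus L^{\zeta_3^{2}} \]
as $R\langle u\rangle$-lattices according to the three eigenvalues of $u^5$. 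The candidate tuple determines the eigenvalues of $D_\chi(u)$, hence each $\operatorname{rank}_R L^{\zeta_3^j}$ and, via Lemma \ref{ModularCyclicModules}, the partition of each $\bar L^{\zeta_3^j}$ as a $k\langle u\rangle$-module.

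On the other hand, the Brauer tree prescribes the composition factors of $\bar L$ as a $kG$-module; each simple factor, regarded as a $k\langle u\rangle$-module, carries a partition computable from its Brauer character evaluated at $u^3$ (which is rationally conjugate to a group element since it has prime order $5$ and the partial-augmentation tuple is fixed). Iterated application of Theorem \ref{LRTheorem} together with the symmetry in Remark \ref{prop:symmetrie_of_lrc}(b) then restricts which partitions the $\bar L^{\zeta_3^j}$ can carry: they must be obtainable as successive Littlewood-Richardson extensions of the composition-factor partitions. The hard part, and the bulk of the proof, is the case-by-case verification that for each surviving tuple the eigenvalue-forced partition of at least one $\bar L^{\zeta_3^j}$ is LR-incompatible with the Brauer-tree-forced composition-factor partitions, yielding the required contradiction. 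A well-chosen character $\chi$ (for instance one of the non-trivial irreducibles in $B_0$ of small degree) should make this LR-incompatibility visible uniformly across the surviving candidates and across all almost simple extensions $G$ of $\PSL(2,81)$.
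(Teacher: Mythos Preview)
Your plan is the paper's approach: HeLP followed by the lattice method at $p=5$ with Littlewood--Richardson constraints. Two points deserve correction.

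First, a genuine error. You write that the partition of each simple $kG$-composition factor as a $k\langle u\rangle$-module is ``computable from its Brauer character evaluated at $u^3$''. But $u^3$ has order $5$, hence is $5$-singular, and $5$-Brauer characters are undefined there; even on $5$-regular elements the Brauer character records only the dimensions of the $\zeta_3^j$-components, never the Jordan structure of $\bar u$. What actually makes the method work is that the relevant simple $kG$-modules are reductions of \emph{ordinary} irreducibles that remain irreducible modulo $5$. For those one computes the eigenvalues of $u$ under the lifted representation from the candidate partial augmentations, and then Lemma~\ref{prop:rank_op_RCp_lattices} (the three indecomposable $RC_5$-lattices over an unramified ring, of ranks $1$, $4$, $5$) converts eigenvalue multiplicities into partitions. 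Without this lifting step your plan has no mechanism to produce the partitions you need for the LR comparison.

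Second, the paper streamlines the argument by passing immediately to $A=\Aut(\PSL(2,81))=\PSL(2,81).(2\times 4)$, which handles all almost simple $G$ with this socle at once. In $A$ there is a single class of order $3$ and a single class of order $5$, and HeLP leaves exactly one candidate, $(\varepsilon_{\cc{3a}}(u),\varepsilon_{\cc{5a}}(u))=(6,-5)$ with $u^3$ and $u^5$ rationally conjugate to group elements. The contradiction then comes from one reducible character $\chi_{35}$ of degree $164$ satisfying $\chi_{35}'=\chi_1'+\chi_4'+\chi_{28}'+\chi_{29}'$ with $\chi_{28}',\chi_{29}'$ irreducible: the skew diagram for $\bar L_{35}^{\zeta_3}/\bar L_{28}^{\zeta_3}$ forces at least seven parts of size $\geq 2$ in the quotient partition, whereas $\bar L_{29}^{\zeta_3}$ has exactly five. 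No case analysis over several $G$ or several candidate tuples is required.
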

\begin{proof} Let $A$ be the automorphism group of $\PSL(2,81)$. Once we show that $\mathrm{V}(\mathbb{Z}A)$ contains no unit of order $15$, the lemma is proven. Let $u \in \V(\ZZ A)$ be of order $15$. Using the \texttt{GAP}-package implementing the HeLP-method \cite{HeLPPackage} we obtain that $u$ has the partial augmentations  
\[ (\varepsilon_{\cc{3a}}(u^5), \varepsilon_{\cc{5a}}(u^3), \varepsilon_{\cc{3a}}(u), \varepsilon_{\cc{5a}}(u)) = (1, 1, 6, -5). \] 
Note that the Brauer table and the decomposition matrix of $G$ for the prime $5$ are not available in \texttt{GAP}. We can determine those parts relevant to us from \cite[Theorem~15.18]{Isaacs}, or rather its implementation in \texttt{GAP} \cite{GAP} via  \texttt{PrimeBlocks(CharacterTable("L2(81).(2x4)"), 5)}. These parts are given in Table \ref{CT_DM_PGammaL_2_81}.

\begin{table}[h]% \centering
\caption{Part of the character table and decomposition matrix for $5$ of $\PSL(2,81).(2 \times 4)$}\label{CT_DM_PGammaL_2_81}\centering
\begin{subtable}[c]{0.3\textwidth} \centering
\subcaption{Part of the character table}
\begin{tabular}{l c c c c} \hline
{ } & $\cc{1a}$ & $\cc{3a}$ & $\cc{5a}$ \\ \hline \hline
$\chi_{1}$ & $1$ & $1$ & $1$  \\
$\chi_4$ & $1$ & $1$ & $1$  \\
$\chi_{28}$ & $81$ & $0$ & $1$  \\
$\chi_{29}$ & $81$ & $0$ & $1$  \\
$\chi_{35}$ & $164$ & $2$ & $-1$ \\ \hline
\end{tabular}
\end{subtable}
\qquad
\begin{subtable}[c]{0.42\textwidth} \centering
\subcaption{Part of the decomposition matrix for $5$}
\begin{tabular}{l c c c c} \hline
{ } & $\psi_{1}$  & $\psi_4$ & $\psi_{28}$ & $\psi_{29}$ \\ \hline\hline
$\chi_{1}$ & 1 & $\cdot$ & $\cdot$ & $\cdot$ \\
$\chi_4$ & $\cdot$ & 1 & $\cdot$ & $\cdot$ \\
$\chi_{28}$ & $\cdot$ & $\cdot$ & 1 & $\cdot$ \\
$\chi_{29}$ & $\cdot$ & $\cdot$ & $\cdot$ & 1 \\
$\chi_{35}$ & 1 & 1 & 1 & 1 \\ \hline
\end{tabular}
\end{subtable}
\end{table}

For the multiplicities of the eigenvalues of $u$ under the corresponding representations we obtain
{\small
\begin{align*}
D_1(u) \sim D_4(u) &\sim (1), \\
D_{28}(u) \sim D_{29}(u) &\sim \left(\eigbox{3}{1},\ \eigbox{6}{\zeta_5,...,\zeta_5^{-1}}, \eigbox{7}{\zeta_3,\zeta_3^{-1}},\ \eigbox{5}{\zeta_{15},...,\zeta_{15}^{-1}}\right), \\
D_{35}(u) &\sim \left(\eigbox{20}{1},\ \eigbox{9}{\zeta_5,...,\zeta_5^{-1}}, \eigbox{6}{\zeta_3,\zeta_3^{-1}},\ \eigbox{12}{\zeta_{15},...,\zeta_{15}^{-1}}\right). 
\end{align*}}
We give an example how these kind of eigenvalues can be obtained. Since for $u^5$ all but one partial augmentations vanish, we have $\chi_{28}(u^5) = \chi_{28}(\cc{3a}) = 0$. Similarly $\chi_{28}(u^3) = \chi_{28}(\cc{5a}) = 1$. This implies
\begin{align*}
D_{28}(u^5) &\sim \left( \eigbox{27}{1},\ \eigbox{27}{\zeta_3, \zeta_3^{-1}} \right), \\
D_{28}(u^3) &\sim \left( \eigbox{17}{1},\ \eigbox{16}{\zeta_5, \zeta_5^2, \zeta_5^{-2},\zeta_5^{-1}} \right).
\end{align*}
Now 
\[\chi_{28}(u) = \varepsilon_{\cc{3a}}(u)\chi_{28}(\cc{3a}) + \varepsilon_{\cc{5a}}(u)\chi_{28}(\cc{5a}) = 6 \cdot 0 - 5 \cdot 1 = -5. \]
On the other hand the eigenvalues of $D_{28}(u)$ are pairwise products of the eigenvalues of $D_{28}(u^3)$ and $D_{28}(u^5)$ and the only possibility to sum these products to $-5$ is the one given above. 

Since $\bar{L}_1$ and $\bar{L}_4$ are sub- or factor modules of $\bar{L}_{35}^1$ we may assume by Remark \ref{prop:symmetrie_of_lrc} that $\bar{L}_{28}^{\zeta^3}$ is a submodule of $\bar{L}_{35}^{\zeta^3}$ such that $\bar{L}_{35}^{\zeta^3}/\bar{L}_{28}^{\zeta^3} \cong \bar{L}_{29}^{\zeta^3}$. We will argue using the combinatorial description of Littlewood-Richardson coefficients introduced in Remark \ref{prop:symmetrie_of_lrc}. Let $\lambda$ be a partition corresponding to $\bar{L}_{35}^{\zeta^3}$ in the sense of Definition \ref{partitions} and let $\mu$ be a partition corresponding to $\bar{L}_{28}^{\zeta^3}$. Then by Lemma \ref{prop:rank_op_RCp_lattices} and the eigenvalues computed above $\lambda$ contains exactly twelve entries being equal to $4$ or $5$, while $\mu$ contains exactly five such entries. That means that the skew diagram corresponding to $\lambda/\mu$ has the form given in Figure \ref{YoungDiagramPSL281} Here a white box is appearing independently of the concrete isomorphism types of $\bar{L}_{35}^{\zeta^3}$ and $\bar{L}_{28}^{\zeta^3}$, while a red box might appear or not.

\begin{figure}[h]
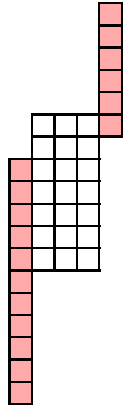

\centering
\ytableausetup{smalltableaux}
\begin{ytableau}
\none & \none & \none & \none & *(Schweinchenrosa) \\
\none & \none & \none & \none & *(Schweinchenrosa)\\
\none & \none & \none & \none & *(Schweinchenrosa) \\
\none & \none & \none & \none & *(Schweinchenrosa) \\
\none & \none & \none & \none & *(Schweinchenrosa) \\
\none & & & & *(Schweinchenrosa) \\
\none & & & \\
*(Schweinchenrosa) & & & \\
*(Schweinchenrosa) & & & \\
*(Schweinchenrosa) & & & \\
*(Schweinchenrosa) & & & \\
*(Schweinchenrosa) & & & \\
*(Schweinchenrosa) \\
*(Schweinchenrosa) \\
*(Schweinchenrosa) \\
*(Schweinchenrosa) \\
*(Schweinchenrosa) \\
*(Schweinchenrosa)  
\end{ytableau}
\caption{Skew diagram corresponding to $\bar{L}_{35}^{\zeta_3}/\bar{L}_{28}^{\zeta_3}$}
\label{YoungDiagramPSL281}
\end{figure}

If one wants to fill this skew diagram with positive integers such that it becomes a semistandard skew tableau satisfying the lattice property some entries are prescribed in any case. Namely the second and third column have to contain the numbers $1$ to $7$, cf. Figure \ref{YoungTableauPSL281}.

\begin{figure}[h]
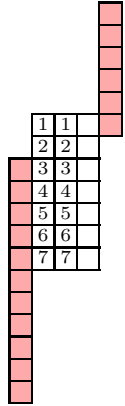

\centering
\ytableausetup{smalltableaux}
\begin{ytableau}
\none & \none & \none & \none & *(Schweinchenrosa) \\
\none & \none & \none & \none & *(Schweinchenrosa)\\
\none & \none & \none & \none & *(Schweinchenrosa) \\
\none & \none & \none & \none & *(Schweinchenrosa) \\
\none & \none & \none & \none & *(Schweinchenrosa) \\
\none & 1 & 1 & & *(Schweinchenrosa) \\
\none & 2 & 2 & \\
*(Schweinchenrosa) & 3 & 3 & \\
*(Schweinchenrosa) & 4 & 4 & \\
*(Schweinchenrosa) & 5 & 5 & \\
*(Schweinchenrosa) & 6 & 6 & \\
*(Schweinchenrosa) & 7 & 7 & \\
*(Schweinchenrosa) \\
*(Schweinchenrosa) \\
*(Schweinchenrosa) \\
*(Schweinchenrosa) \\
*(Schweinchenrosa) \\
*(Schweinchenrosa)  
\end{ytableau}
\caption{Skew tableau corresponding to $\bar{L}_{35}^{\zeta_3}/\bar{L}_{28}^{\zeta_3}$ with some definitive entries, if it should become a semistandard tableau satisfying the lattice property}
\label{YoungTableauPSL281}
\end{figure}

These entries imply that the quotient $\bar{L}_{35}^{\zeta_3}/\bar{L}_{28}^{\zeta_3} \cong \bar{L}_{29}^{\zeta_3}$ has at least seven direct indecomposable summands of dimension at least $2$. However by the eigenvalues of $D_{29}(u)$ computed above and Lemma \ref{prop:rank_op_RCp_lattices} it has exactly five indecomposable summands of this form. A contradiction to the existence of $u$.
\end{proof}

\begin{corollary}\label{PQPSL281}
The Prime Graph Question holds for the group $\PSL(2,81).4b$, $\PSL(2,81).2^2$ and $\Aut(\PSL(2,81)) = \PSL(2,81).(2 \times 4)$.
\end{corollary}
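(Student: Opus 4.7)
The plan is to combine the HeLP-based analysis from the companion paper \cite{HeLPTeil} with the preceding lemma. By \cite[Theorem~B]{HeLPTeil}, for each of the three groups appearing in the statement, the HeLP method alone already excludes the existence of normalized torsion units of every order $pq$ (with $p,q$ distinct primes and no element of order $pq$ in $G$), except possibly those of order $15$. Hence all that is missing, in each case, is to rule out units of order $15$.

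The preceding lemma delivers exactly this exclusion: it asserts that $\V(\ZZ G)$ has no unit of order $15$ whenever $G$ is an almost simple group with socle $\PSL(2,81)$. Since $\PSL(2,81).4b$, $\PSL(2,81).2^2$, and $\PSL(2,81).(2\times 4)=\Aut(\PSL(2,81))$ are all almost simple with socle $\PSL(2,81)$, the lemma applies directly to each of them. (If one prefers to argue via embeddings rather than to reread the lemma, note that $\PSL(2,81).4b$ and $\PSL(2,81).2^2$ both sit as subgroups of $\Aut(\PSL(2,81))$, and $\V(\ZZ H)\hookrightarrow\V(\ZZ G)$ for any $H\le G$ preserves the order of torsion units.)

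Combining the two inputs yields the Prime Graph Question for the three groups, with no further calculation needed. The genuine obstacle was already overcome inside the lemma, via the Littlewood--Richardson skew-tableau argument applied to $\bar{L}_{35}^{\zeta_3}/\bar{L}_{28}^{\zeta_3}\cong \bar{L}_{29}^{\zeta_3}$; at the level of the corollary itself I foresee no additional difficulty beyond assembling the references and checking that no other $pq$ remains open for the three groups in \cite[Theorem~B]{HeLPTeil}.
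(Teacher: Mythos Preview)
Your proposal is correct and matches the paper's proof essentially verbatim: invoke \cite[Theorem~B]{HeLPTeil} to reduce the Prime Graph Question for each of the three groups to the non-existence of units of order $15$, and then cite the preceding lemma to eliminate that remaining case. The additional remark about the embedding $\V(\ZZ H)\hookrightarrow\V(\ZZ G)$ is a harmless alternative justification but is not needed, since the lemma already covers all almost simple groups with socle $\PSL(2,81)$.
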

\begin{proof}
If $G$ is one of the three groups mentioned in the corollary, then by \cite[Theorem B]{HeLPTeil} we only have to exclude the existence of units of order $15$ in $\mathrm{V}(\mathbb{Z}G)$. This is done in the preceding lemma. 
\end{proof}

The following example exhibits a somehow surprising situation. Our method does not directly apply to $\PSL(3,4)$, but embedding this group into the Mathieu group of degree $22$ allows a successful application to $\PSL(3,4)$.
\begin{lemma}\label{PQPSL34}
Let $G = \PSL(3,4)$. Then the Prime Graph Question holds for $G$.  
\end{lemma}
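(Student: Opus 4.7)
The plan is to combine a HeLP reduction with an application of the lattice method carried out not inside $G = \PSL(3,4)$ itself, but inside the Mathieu group $M_{22}$, of which $\PSL(3,4)$ is a maximal subgroup. The idea is to exploit the inclusion of group rings $\ZZ G \hookrightarrow \ZZ M_{22}$, so that a torsion unit $u \in \V(\ZZ G)$ becomes a torsion unit in $\V(\ZZ M_{22})$ whose partial augmentations on $M_{22}$-classes are obtained by summing those on the $G$-classes that fuse there; one then has access to the richer ordinary and modular representation theory of $M_{22}$.

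First, I would invoke \cite[Theorem B]{HeLPTeil} and the GAP implementation \cite{HeLPPackage,HeLPPaper} to reduce the problem to the short list of orders $n \in \{6, 10, 14, 15, 21, 35\}$ that are products of two distinct primes dividing $|G|$ but are not element orders of $G$. For most of these orders HeLP, combined with \cref{prop:Wagner} and \cref{prop:basic}, will already rule out units in $\V(\ZZ G)$ outright; this should leave at most one problematic order together with a small finite list of surviving partial-augmentation tuples. (Because $9$ divides $|G|$, \cref{Prime3} is \emph{not} available to close orders involving~$3$, which is presumably where the difficulty lies.)

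Second, for each surviving order $n = pq$ and each surviving tuple of partial augmentations, the unit $u$ is lifted to $\V(\ZZ M_{22})$, and its partial augmentations there are computed from the $G$-to-$M_{22}$ class-fusion. From the ordinary character table of $M_{22}$ one reads off, for a suitable pair $\chi, \psi \in \operatorname{Irr}(M_{22})$, the eigenvalue multiplicities of $D_\chi(u)$, $D_\psi(u)$ and their powers, exactly as in the $\PSL(2,81)$ proof above. The key is to pick $\chi,\psi$ so that their reductions modulo the relevant prime $p$ satisfy a simple Brauer relation such as $\psi' = \mathbf{1}' + \chi'$, so that one is in the framework of \cref{prop:AltesOmnibusLemma,prop:Omnibusproposition}, or more generally so that $\overline{L}_\psi$ has only two (or very few) controlled $kM_{22}$-composition factors. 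Checking via Remark \ref{LatticeRemarks}(c) that $\ZZ_p[\chi]$ and $\ZZ_p[\psi]$ are unramified guarantees that \cref{prop:rank_op_RCp_lattices} applies.

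Third, \cref{LatticeProposition} decomposes the reduced lattices as $k\langle u\rangle$-modules into eigenvalue pieces $\bar L_\chi^{\zeta^i}$, $\bar L_\psi^{\zeta^i}$, and \cref{LRTheorem} together with \cref{prop:symmetrie_of_lrc} translates the submodule/quotient relation forced by the Brauer decomposition into the non-vanishing of a Littlewood--Richardson coefficient $c^\lambda_{\mu,\nu}$. A skew-tableau count in the style of the $\PSL(2,81)$ argument above—where forced column entries would demand more indecomposable summands of $\bar L^{\zeta^i}$ than the computed eigenvalues of $u$ allow—should give the desired contradiction. The main obstacle is locating characters of $M_{22}$ whose Brauer decomposition is simple enough to be combinatorially tractable while still being sensitive enough to distinguish the surviving HeLP tuples on $G$; once such a pair is found, the tableau combinatorics is routine, and the lemma follows.
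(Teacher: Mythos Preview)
Your plan is essentially the paper's own approach: reduce via \cite[Theorem B]{HeLPTeil} to units of order $6$ with $(\varepsilon_{\cc{2a}}(u),\varepsilon_{\cc{3a}}(u))\in\{(4,-3),(-2,3)\}$, embed $G$ into $M_{22}$ (which also has unique classes of involutions and of order-$3$ elements, so the fusion is trivial), and apply the lattice method at $p=3$. The only minor deviations are that the relevant Brauer relation in $M_{22}$ is $\chi_9' = \chi_2' + \chi_8'$ (degrees $231 = 21 + 210$), not one involving the trivial character, and that no Littlewood--Richardson combinatorics is actually needed: a straight count via \cref{prop:rank_op_RCp_lattices} of indecomposable $k\langle u\rangle$-summands of dimension $\ge 2$ in $\bar L_8^{\pm 1}$ versus $\bar L_9^{\pm 1}$ already gives the contradiction in each case.
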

\begin{proof} By \cite[Theorem B]{HeLPTeil} it only remains to consider units of order $6$ in $\V(\ZZ G)$. $G$ contains one conjugacy class of involutions and one conjugacy class  of elements of order $3$ but no elements of order $6$. If $\V(\ZZ G)$ contains a unit of order $6$, then its cube and its square are conjugate to group elements and (as stated in \cite[Table~6]{HeLPTeil}) using an irreducible character of degree $35$ and one of degree $45$ it is easy to see that its partial augmentations are $(4, -3)$ or $(-2, 3)$ on the classes of involutions and elements of order $3$ of $G$. Also the application of the lattice method to the group $G$ will not eliminate both possibilities. However, $G$ is a maximal subgroup of the Mathieu group $H = M_{22}$ of degree $22$ \cite[Chapter XII, Theorem 1.4]{Huppert3}. The group $H$ also contains unique conjugacy classes of involutions, say $\cc{2a}$, and of elements of order $3$, say $\cc{3a}$. Thus if a unit of order $6$ in $\V(\ZZ G)$ exists, there exists also a unit in $\V(\ZZ H)$ with the corresponding partial augmentations. So assume we have $u \in \V(\ZZ H)$ with
\[(\varepsilon_{\cc{2a}}(u^3), \varepsilon_{\cc{3a}}(u^2), \varepsilon_{\cc{2a}}(u), \varepsilon_{\cc{3a}}(u)) \in \{ (1, 1, 4, -3), (1, 1, -2, 3) \} .\]
It was already shown in \cite[Theorem 1 (iv)]{KonovalovM22} that this possibility can not be ruled out using the HeLP-method. We will use the characters of $H$ given in Table \ref{CT_DM_M22} along with their decomposition behaviour indicated there. By Remark \ref{prop:symmetrie_of_lrc} we may assume that $\bar{L}_9$ contains a submodule isomorphic to $\bar{L}_8$. By $\zeta$ we denote a primitive third root of unity.

\begin{table}[h]% \centering
\caption{Part of the character table and decomposition matrix for $3$ of $M_{22}$}\label{CT_DM_M22}\centering
\begin{subtable}[c]{0.3\textwidth} \centering
\subcaption{Part of the character table}
\begin{tabular}{l c c c c} \hline
{ } & $\cc{1a}$ & $\cc{2a}$ & $\cc{3a}$ \\ \hline \hline
$\chi_{2}$ & $21$ & $5$ & $3$  \\
$\chi_{8}$ & $210$ & $2$ & $3$  \\
$\chi_{9}$ & $231$ & 7 & $-3$  \\  \hline
\end{tabular}
\end{subtable}
\qquad
\begin{subtable}[c]{0.3\textwidth} \centering
\subcaption{Part of the decomposition matrix for $3$}
\begin{tabular}{l c c} \hline
{ } & $\varphi_{2}$  & $\varphi_{9}$ \\ \hline\hline
$\chi_{2}$ & 1 & $\cdot$  \\
$\chi_{8}$ & $\cdot$ & 1  \\
$\chi_{9}$ & 1 & 1   \\ \hline
\end{tabular}
\end{subtable}
\end{table} 

\emph{Case 1}: $(\varepsilon_{\cc{2a}}(u^3), \varepsilon_{\cc{3a}}(u^2), \varepsilon_{\cc{2a}}(u), \varepsilon_{\cc{3a}}(u)) = (1, 1, 4, -3)$.\\
It is easy to compute from the partial augmentations that the eigenvalues of $u$ under $D_8$ and $D_9$ are
\begin{align*} D_8(u) & \sim \left( \eigbox{38}{1},\ \eigbox{34}{\zeta, \zeta^2},\ \eigbox{34}{-1},\ \eigbox{35}{-\zeta, -\zeta^2} \right), \\
 D_9(u) & \sim \left( \eigbox{31}{1},\ \eigbox{44}{\zeta, \zeta^2},\ \eigbox{44}{-1},\ \eigbox{34}{-\zeta, -\zeta^2} \right).
\end{align*}
$\bar{L}_8^{-1}$ has $35$ direct indecomposable summands of dimension at least $2$ by Lemma \ref{prop:rank_op_RCp_lattices}, however $\bar{L}_9^{-1}$ has only $34$ such direct summands. This contradicts the fact that $\bar{L}_8^{-1}$ is isomorphic to a submodule of $\bar{L}_9^{-1}$.

\emph{Case 2}: $(\varepsilon_{\cc{2a}}(u^3), \varepsilon_{\cc{3a}}(u^2), \varepsilon_{\cc{2a}}(u), \varepsilon_{\cc{3a}}(u)) = (1, 1, -2, 3)$.\\
The eigenvalues of $D_8(u)$ and $D_9(u)$ can be calculated to be 
\begin{align*} D_8(u) & \sim \left( \eigbox{36}{1},\ \eigbox{35}{\zeta, \zeta^2},\ \eigbox{36}{-1},\ \eigbox{34}{-\zeta, -\zeta^2} \right), \\
 D_9(u) & \sim \left( \eigbox{51}{1},\ \eigbox{34}{\zeta, \zeta^2},\ \eigbox{24}{-1},\ \eigbox{44}{-\zeta, -\zeta^2} \right)
\end{align*}
in this case. By Lemma \ref{prop:rank_op_RCp_lattices}, $\bar{L}_9^{1}$ has $34$ direct indecomposable summands of dimension $2$ or $3$, contradicting the fact that it contains a submodule isomorphic to $\bar{L}_8^{1}$, which has $35$ such direct summands. 
\end{proof}

\begin{lemma}\label{PQPSL37}
Let $H = \PSL(3,7)$ and $G = \PSL(3,7).2$. Then the Prime Graph Question holds for $H$. If there is a unit $u$ of order $21$ in $\mathrm{V}(\mathbb{Z}G)$, then
\begin{align*}& (\varepsilon_{\cc{7a}}(u^3), \varepsilon_{\cc{7b}}(u^3), \varepsilon_{\cc{7c}}(u^3)) = (5,0,-4) \\ \text{and} \quad &(\varepsilon_{\cc{3a}}(u), \varepsilon_{\cc{7a}}(u), \varepsilon_{\cc{7b}}(u), \varepsilon_{\cc{7c}}(u)) = (-6,-1,0,8) \end{align*}
where the ordering of the conjugacy classes is as in the GAP character table library \cite{CTblLib}. 
\end{lemma}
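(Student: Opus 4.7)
The plan is to run the HeLP-package first, which leaves only finitely many partial-augmentation tuples for a hypothetical unit $u \in \V(\ZZ H)$ (resp.\ $u \in \V(\ZZ G)$) of order $21$. All other potentially problematic mixed orders are already handled in \cite[Theorem B]{HeLPTeil} together with Lemma~\ref{prop:basic} and Lemma~\ref{prop:Wagner}, so only order $21$ remains critical for both groups. Fix the prime $p = 7$: since $u$ has order $21$, the cyclic group $\langle u \rangle$ has Sylow $7$-subgroup of order exactly $7$, so the analysis is in the easy regime of Lemma~\ref{prop:rank_op_RCp_lattices} and no higher Littlewood--Richardson machinery at $p$ is needed.

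For each surviving HeLP-tuple I would select from the GAP character table library a pair of ordinary characters $\chi, \psi$ whose $7$-modular reductions satisfy $\psi' = \chi' + \varphi'$ with $\varphi'$ irreducible (read off from the $7$-decomposition matrix). The partial augmentations and the Brauer character values determine the eigenvalue multiplicities of $D_\chi(u)$ and $D_\psi(u)$. Proposition~\ref{LatticeProposition} then decomposes $\bar L_\chi$ and $\bar L_\psi$ as $k\langle \bar u\rangle$-modules into $\zeta_3^i$-eigencomponents, and Theorem~\ref{LRTheorem} combined with Remark~\ref{prop:symmetrie_of_lrc} lets me assume (possibly after swapping) that $\bar L_\chi^{\zeta_3^i}$ embeds into $\bar L_\psi^{\zeta_3^i}$ with prescribed quotient. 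Counting indecomposable summands of $k$-dimension at least $2$ via Lemma~\ref{prop:rank_op_RCp_lattices} in each such block, exactly as in Lemma~\ref{PQPSL34}, should yield a numerical contradiction in every case.

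For $H$, the expectation is that every HeLP-compatible tuple is eliminated, which proves the Prime Graph Question for $H$. For $G = H.2$ the outer involution fuses (or preserves) the three $7$-classes of $H$ in a controlled way, reducing the number of independent partial augmentations. The same eigenvalue- and summand-counting should rule out all but the tuple $(\varepsilon_{\cc{7a}}(u^3), \varepsilon_{\cc{7b}}(u^3), \varepsilon_{\cc{7c}}(u^3)) = (5,0,-4)$ for $u^3$. The congruences of Lemma~\ref{prop:Wagner} applied to both primes $3$ and $7$, combined with the normalization $\sum_x \varepsilon_x(u) = 1$ and the fact that $u^7$ has order $3$ and is forced to have $\varepsilon_{\cc{3a}}(u^7) = 1$, then propagate the restriction on $u^3$ to the unique tuple $(\varepsilon_{\cc{3a}}(u), \varepsilon_{\cc{7a}}(u), \varepsilon_{\cc{7b}}(u), \varepsilon_{\cc{7c}}(u)) = (-6,-1,0,8)$.

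The main obstacle is the choice of the pair $(\chi,\psi)$: with three conjugacy classes of elements of order $7$, the eigenvalues of $D_\chi(u^3)$ depend on the individual $\varepsilon_{\cc{7x}}(u^3)$, so only a pair whose character values on $\cc{7a},\cc{7b},\cc{7c}$ are sufficiently distinct will separate the tuples after the further decomposition by the $3$-part of $u$ into $\zeta_3^i$-blocks. Identifying one such pair in the ATLAS/GAP tables of $\PSL(3,7)$ which eliminates every unwanted tuple, together with a pair for $\PSL(3,7).2$ under which exactly the claimed tuple survives, is the computational heart of the argument; the rest is routine application of the lattice method as it was used in the preceding lemmas of this section.
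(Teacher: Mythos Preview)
Your plan diverges from the paper's proof in two essential ways, and at least one of them is a real gap.

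\textbf{Wrong prime.} You fix $p=7$ throughout, but the paper does almost all of the lattice work at $p=3$. The $3$-decomposition matrix of $G=\PSL(3,7).2$ supplies pairs such as $\chi_{25}'=\chi_5'+\chi_{23}'$ and $\chi_4'=\varphi_2+\varphi_3$, $\chi_7'=\varphi_1+\varphi_3+\varphi_7$, and it is these, together with eigenvalue counts for $D_5(u),D_{23}(u),D_{25}(u),D_4(u),D_7(u)$, that force first $\varepsilon_{\cc{3a}}(u)=-6$, then $\varepsilon_{\cc{7a}}(u^3)=5$ and $\varepsilon_{\cc{7a}}(u)=-1$, and finally reduce the $134$ HeLP-survivors to eleven tuples with $\varepsilon_{\cc{7c}}(u^3)\in\{-2,-3,-4\}$. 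Only \emph{one} sub-case (namely $\varepsilon_{\cc{7c}}(u^3)=-3$) is killed at $p=7$, and there the relevant $7$-modular relation is $\chi_9'=\chi_7'+\chi_8'$ with each summand having three irreducible constituents, so the argument is a socle-dimension count rather than the simple ``$\psi'=\chi'+\varphi'$ with $\varphi'$ irreducible'' pattern you propose. There is no evidence that a single $7$-modular pair of the shape you describe exists and separates all the tuples; the paper's need to mix both primes and to use more elaborate filtrations strongly suggests it does not.

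\textbf{Wrong direction for $H$.} You propose to handle $H=\PSL(3,7)$ directly and then pass to $G$. The paper does the opposite, and for good reason: $H$ has \emph{four} classes of elements of order $7$, so the HeLP residue for $H$ is larger and no direct lattice argument is offered. Instead the paper first pins down the unique surviving tuple in $G$, then observes that $\mathrm{Out}(H)\cong S_3$ contains three involutions, giving three non-conjugate copies of $G$ above $H$; in each of them a different pair of the classes $7\beta,7\gamma,7\delta$ fuses into $\cc{7c}$ while the remaining one becomes $\cc{7b}$. A putative $u\in\V(\ZZ H)$ would have to satisfy the $G$-constraint simultaneously for all three fusions, forcing $\varepsilon_{7\beta}(u^3)=\varepsilon_{7\gamma}(u^3)=\varepsilon_{7\delta}(u^3)=0$ and $\varepsilon_{7\alpha}(u^3)=5$, contradicting $\sum\varepsilon=1$. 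Your plan misses this fusion trick entirely.

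Finally, the passage from the constraint on $u^3$ to the full tuple for $u$ is not a matter of Lemma~\ref{prop:Wagner} alone: in the paper it comes out of the explicit eigenvalue multiplicities $a_1,a_3,a_7,a_{21},c_1,c_3,c_7,c_{21}$ obtained from the $p=3$ lattice analysis, which then determine $\chi_5(u)$ and $\chi_7(u)$ numerically and hence the individual $\varepsilon_{\cc{7x}}(u)$.
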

\begin{proof}
Let $u \in \mathrm{V}(\mathbb{Z}G)$ be of order $21$. Applying the HeLP method with the characters given in \cite[Table 6]{HeLPTeil} we obtain that for all possible $134$ distributions of partial augmentations we have $\varepsilon_{\cc{3a}}(u) = -6$. We will use the characters and their decomposition behaviour for $p=3$ indicated in Table \ref{tab:PSL(3.7)_2mod3}.

\begin{table}[h]
\caption{Part of the character table and the decomposition matrix for $p=3$ for $\operatorname{PSL}(3,7).2$}\label{tab:PSL(3.7)_2mod3}
\centering
\begin{subtable}[c]{0.4\textwidth} \centering
\subcaption{Part of the character table}{
\begin{tabular}{l c c c c c}  \hline
{ } & $\cc{1a}$ & $\cc{3a}$ & $\cc{7a}$ & $\cc{7b}$ & $\cc{7c}$ \\ \hline \hline
$\chi_{5}$ & 57 & 3 & 8 & 1 & 1 \\
$\chi_{23}$ & 399 & 3 & 7 & 0 & 0 \\
$\chi_{25}$ & 456 & $-3$ & 15 & 1 & 1 \\  \hline
\end{tabular}}
\end{subtable}
\qquad
\begin{subtable}[c]{0.3\textwidth} \centering
\subcaption{Part of the decomposition matrix for $p = 3$}{
\begin{tabular}{l c c}  \hline
{ } & $\varphi_{5}$  & $\varphi_{21}$ \\ \hline \hline
$\chi_{5}$ & 1 & $\cdot$  \\
$\chi_{23}$ & $\cdot$ & 1 \\
$\chi_{25}$ & 1 & 1 \\ \hline
\end{tabular}}
\end{subtable}
\end{table} 

Let $a_1$, $a_3$, $a_7$, $a_{21}$, $b_1$, $b_3$, $b_7$ and $b_{21}$ be non-negative integers such that
\begin{align*}
D_{5}(u) &\sim \left(\eigbox{a_1}{1},\ \eigbox{a_3}{\zeta_3,\zeta_3^{-1}},\ \eigbox{a_7}{\zeta_7,...,\zeta_7^{-1}},\ \eigbox{a_{21}}{\zeta_{21},...,\zeta_{21}^{-1}}\right), \\
D_{23}(u) &\sim \left(\eigbox{b_1}{1},\ \eigbox{b_3}{\zeta_3,\zeta_3^{-1}},\ \eigbox{b_7}{\zeta_7,...,\zeta_7^{-1}},\ \eigbox{b_{21}}{\zeta_{21},...,\zeta_{21}^{-1}}\right). 
\end{align*}
From these multiplicities of eigenvalues we obtain the equations
\begin{align}
\chi_5(u) &= a_1 + a_{21} - a_3 - a_7, \nonumber \\
\chi_5(u^3) &= a_1 + 2a_3 - a_7 - 2a_{21}, \nonumber \\
\chi_5(u^7) &= a_1 + 6a_7 - a_3 - 6a_{21} = \chi_5(\cc{3a}) = 3,\nonumber \\
\chi_5(u^{21}) &= a_1 +2a_3 + 6a_7 + 12 a_{21} = \chi_5(\cc{1a}) = 57 \label{chi521}  
\end{align}
and analogues equations can be obtained for $\chi_{23}$.
Independent of the exact values of the partial augmentations of $u$ and $u^3$ we have
\begin{align*}
\chi_5(u^3)+\chi_{23}(u^3) &= (8\varepsilon_{\cc{7a}}(u^3)+\varepsilon_{\cc{7b}}(u^3) + \varepsilon_{\cc{7c}}(u^3)) + (7\varepsilon_{\cc{7a}}(u^3)) \\
 &= 15\varepsilon_{\cc{7a}}(u^3) + \varepsilon_{\cc{7b}}(u^3) + \varepsilon_{\cc{7c}}(u^3) =  \chi_{25}(u^3).
\end{align*}
Let $r = 15\varepsilon_{\cc{7a}}(u) + \varepsilon_{\cc{7b}}(u) + \varepsilon_{\cc{7c}}(u).$ Then
\[\chi_5(u)+\chi_{23}(u) = r + 6 \varepsilon_{\cc{3a}}(u) = r - 36 = (r + 18) - 54 = \chi_{25}(u) - 54\]
and this implies

\begin{align*} D_{25}(u) \sim \Big(&\eigbox{a_1 + b_1 + 30}{1},\ \eigbox{a_3 + b_3 - 15}{\zeta_3,\zeta_3^{-1}},\\ & \eigbox{a_7 + b_7 - 6}{\zeta_7,...,\zeta_7^{-1}},\ \eigbox{a_{21} +b_{21} + 3}{\zeta_{21},...,\zeta_{21}^{-1}} \Big).\end{align*}
These multiplicities may be verified  using the character values $\chi_{25}(u),$ $\chi_{25}(u^3),$  $\chi_{25}(u^7)$ and $\chi_{25}(u^{21})$. More precisely we obtain the equalities
\begin{align*}
\chi_{25}(u) &= a_1 + b_1 + 30 + a_{21} + b_{21} + 3 - a_3 - b_3 + 15 - a_7 - b_7 + 6  \\ 
&= a_1 + a_{21} - a_3 - a_7 + b_1 + b_{21} - b_3 - b_7 + 54  \\
&= \chi_5(u) + \chi_{23}(u) +54, \\
\chi_{25}(u^3) &= a_1 + b_1 + 30 + 2(a_3 + b_3 -15) - a_7 - b_7 + 6 -2(a_{21} + b_{21} + 3) \\
  &= a_1 + 2a_3 - a_7 - 2a_{21} + b_1 + 2b_3 - b_7 - 2b_{21} = \chi_5(u^3) + \chi_{23}(u^3),\\
\chi_{25}(u^7) &= a_1 + b_1 + 30 + 6(a_7 + b_7 - 6) - a_3 - b_3 + 15 - 6(a_{21}+b_{21} +3) \\ 
&= a_1 +6a_7 - a_3 - 6a_{21} +b_1 +6b_7 - b_3 - b_{21} - 9  \\
 &= \chi_5(u^7) +\chi_{23}(u^7) - 9 = -3 = \chi_{25}(\cc{3a}), \\
 \chi_{25}(u^{21}) &= a_1 + b_1  + 30 + 2(a_3 + b_3 - 15) + 6(a_7 + b_7 - 6) + 12(a_{21} + b_{21} + 3)  \\
  &= a_1 + 2a_3 + 6a_7  + 12a_{21} + b_1 + 2b_3 + 6b_7 + 12b_{21}  \\
  &= \chi_5(u^{21}) + \chi_{23}(u^{21}) = 57 + 399 = 456 = \chi_{25}(\cc{1a}).     
\end{align*}
These equalities prove the claimed eigenvalues for $D_{25}(u)$. 

We will use the lattice method and in particular Lemma \ref{prop:rank_op_RCp_lattices} several times without further reference. Since $\bar{L}_{23}^1$ is a sub- or factor module of $\bar{L}_{25}^1$ we have
\[a_3 + b_3 - 15 \geq b_3\]
and so $a_3 \geq 15$. Moreover from $\chi_5(u^7) = \chi_5(\cc{3a}) = 3$ we get
\[D_5(u^7) \sim \left(\eigbox{21}{1},\ \eigbox{18}{\zeta_3,\zeta_3^{-1}} \right),\]
implying 
\[a_3+6a_{21} = 18.\]
Since $a_3 \geq 15$, we thus obtain $a_3 = 18$ and $a_{21} = 0$. Hence $\bar{L}_5^{\zeta_7} \cong a_7k$ is a sum of trivial modules and moreover by \eqref{chi521} we have $a_7 \leq 3$. Since $\bar{L}_{25}^{\zeta_7}$ has at least three more indecomposable summands of dimension $2$ or higher compared with $\bar{L}_{23}^{\zeta_7}$ we have
\begin{align*}
3 \geq {\rm{dim}}(\bar{L}_{25}^{\zeta^7}/\bar{L}^{\zeta^7}_{23}) = {\rm{dim}}(\bar{L_5}^{\zeta^7}) = {\rm{dim}}(a_7k) = a_7,
\end{align*}
implying $a_7 = 3$. From \eqref{chi521} we then get $a_1 = 3$. This implies
\[D_5(u^3) \sim \left(\eigbox{39}{1},\ \eigbox{3}{\zeta_7,...,\zeta_7^{-1}} \right)\]
and so
\[\chi_5(u^3) = 36 = 8\varepsilon_{\cc{7a}}(u^3) + \varepsilon_{\cc{7b}}(u^3) + \varepsilon_{\cc{7c}}(u^3).\]
Using $\varepsilon_{\cc{7a}}(u^3) + \varepsilon_{\cc{7b}}(u^3) + \varepsilon_{\cc{7c}}(u^3) = 1$ we obtain $\varepsilon_{\cc{7a}}(u^3) = 5.$ 

Furthermore
\begin{align*}
\chi_5(u) &= a_1 + a_{21} - a_3 - a_7 = 3 - 18 - 3 = -18  \\
  &= 3\varepsilon_{\cc{3a}}(u) + 8\varepsilon_{\cc{7a}}(u) + \varepsilon_{\cc{7b}}(u) + \varepsilon_{7c}(u).
\end{align*}
\hspace{-2.7mm} Together with 
\[\varepsilon_{\cc{7a}}(u) + \varepsilon_{\cc{7b}}(u) + \varepsilon_{\cc{7c}}(u) =  1 - \varepsilon_{\cc{3a}}(u) = 7\]
this implies $\varepsilon_{\cc{7a}}(u)=-1.$ 

Filtering the $134$ possible distributions of partial augmentations for $u$ we started with for these properties we are left with $11$ possibilities all of which satisfy $\varepsilon_{\cc{7c}}(u) \in \{-2,-3,-4\}$. We will use the characters and their decomposition behaviour with respect to $p = 3$ given in Table \ref{tab:PSL(3.7)_2Teil2}.
\begin{table}[h]
\caption{Part of the character table and decomposition matrix with respect to $p=3$ for $\operatorname{PSL}(3,7).2$}\label{tab:PSL(3.7)_2Teil2}\centering
\begin{subtable}[c]{0.4\textwidth} \centering
\subcaption{Part of the character table}{
\begin{tabular}{l c c c c c}  \hline
{ } & $\cc{1a}$ & $\cc{3a}$ & $\cc{7a}$ & $\cc{7b}$ & $\cc{7c}$ \\ \hline  \hline
$\chi_1$ & $1$ & $1$ & $1$ & $1$ & $1$ \\ 
$\chi_2$ & $1$ & $1$ & $1$ & $1$ & $1$ \\ 
$\chi_4$ & 56 & 2 & 7 & 0 & 0 \\
$\chi_7$ & 152 & $-1$ & 5 & 5 & $-2$ \\  \hline
\end{tabular}}
\end{subtable}
\qquad
\begin{subtable}[c]{0.4\textwidth} \centering
\subcaption{Part of the decomposition matrix for $p=3$}{
\begin{tabular}{l c c c c} \hline
{ } & $\varphi_{1}$  & $\varphi_2$ & $\varphi_{3}$ & $\varphi_7$ \\ \hline \hline
$\chi_1$ & $1$ & $\cdot$ & $\cdot$ & $\cdot$ \\
$\chi_2$ & $\cdot$ & $1$ & $\cdot$ & $\cdot$ \\
$\chi_{4}$ & $\cdot$ & 1 & 1 & $\cdot$  \\
$\chi_{7}$ & 1 & $\cdot$ & 1 & 1 \\ \hline
\end{tabular}}
\end{subtable}
\end{table}
From the equation $\chi_4(u) = 2\varepsilon_{\cc{3a}}(u) + 7\varepsilon_{\cc{7a}}(u) = -19$ and the partial augmentations of $u^3$ and $u^7$ we obtain the eigenvalues of $D_4(u)$ as
\[D_4(u) \sim \left(\eigbox{2}{1},\ \eigbox{18}{\zeta_3,\zeta_3^{-1}},\ \eigbox{3}{\zeta_7,...,\zeta_7^{-1}}\right).\]
Let
\[D_7(u) \sim \left(\eigbox{c_1}{1},\ \eigbox{c_3}{\zeta_3,\zeta_3^{-1}},\ \eigbox{c_7}{\zeta_7,...,\zeta_7^{-1}},\ \eigbox{c_{21}}{\zeta_{21},...,\zeta_{21}^{-1}}\right).\]
From the eigenvalues of $D_4(u)$ and the decomposition behaviour of $\chi_4$ we know that $S_3^1$, as a sub- or factor module of $\bar{L}_4^1$, has at least $17$ direct indecomposable summands of $k$-dimension at least $2$. Since $S_3^1$ is also a sub- or factor module of $\bar{L}_7^1$, this implies $c_3 \geq 17.$ Moreover we know $\chi_7(u^7) = -1,$ implying  
\[D_7(u^7) \sim \left(\eigbox{50}{1},\ \eigbox{51}{\zeta_3,\zeta_3^{-1}}\right).\] 
This gives $c_3 + 6c_{21} = 51$ and hence $c_3 \equiv 3 \mod 6.$

We will separate the three possibilities $\varepsilon_{\cc{7c}}(u) \in \{-2,-3,-4\}$.
 
\textit{Case 1:}  $\varepsilon_{\cc{7c}}(u^3) = -2,$ so $(\varepsilon_{\cc{7a}}(u^3), \varepsilon_{\cc{7b}}(u^3), \varepsilon_{\cc{7c}}(u^3)) = (5,-2,-2).$\\
In this case $\chi_7(u^3) = 19$ and 
\[D_7(u^3) \sim \left(\eigbox{38}{1},\ \eigbox{19}{\zeta_7,...,\zeta_7^{-1}}\right).\]
Thus $D_7(u)$ has exactly $6\cdot 19 = 114$ eigenvalues being primitive $7$th or $21$st roots of unity. Since there are only $38$ eigenvalues remaining, we obtain $c_3 \leq 19.$ This contradicts $c_3 \geq 17$ and $c_3 \equiv 3 \mod 6.$

\textit{Case 2:} $\varepsilon_{\cc{7c}}(u^3) = -3,$ so $(\varepsilon_{\cc{7a}}(u^3), \varepsilon_{\cc{7b}}(u^3), \varepsilon_{\cc{7c}}(u^3)) = (5,-1,-3).$\\
The same way as in Case 1 we obtain 
\[D_7(u^3) \sim \left(\eigbox{44}{1},\ \eigbox{18}{\zeta_7,...,\zeta_7^{-1}}\right).\]
So $D_7(u)$ has exactly 108 eigenvalues being primitive $7$th or $21$st roots of unity. This implies $c_3 \leq 22$ and so $c_3=21.$ From the equation $c_3 + 6c_{21} = 51$ we get $c_{21} = 5,$ and so $c_7=8$ and $c_1 = 2.$ Thus 
\[\chi_7(u) = c_1 + c_{21} - c_3 - c_7 = -22.\]
Moreover from $\varepsilon_{\cc{7b}}(u) + \varepsilon_{\cc{7c}}(u) = 8$ and
\begin{align*}
-22 = \chi_7(u) &= -\varepsilon_{\cc{3a}}(u) +5\varepsilon_{\cc{7a}}(u) + 5\varepsilon_{\cc{7b}}(u) - 2\varepsilon_{\cc{7c}}(u) \\
 &= 6 - 5  + 5(8-\varepsilon_{7c}(u)) - 2\varepsilon_{7c}(u) = 41 -7\varepsilon_{7c}(u). 
\end{align*}   
we obtain $\varepsilon_{\cc{7c}}(u) = 9$ and $\varepsilon_{\cc{7b}}(u) = -1.$

We will use the characters and their decomposition behaviour with respect to $p=7$ given in Table \ref{tab:PSL(3.7)_2mod7}. Since from here on we only use those characters, we also denote them by $\varphi_i$.
\begin{table}[h]
\caption{Parts of the ordinary character table, the $7$-modular Brauer character table and the decomposition matrix for $p=7$ for $\operatorname{PSL}(3,7).2$.}\label{tab:PSL(3.7)_2mod7}\centering
\begin{subtable}[c]{0.4\textwidth} \centering
\subcaption{Part of the character table}{
\begin{tabular}{l c c c c c}  \hline
{ } & $\cc{1a}$ & $\cc{3a}$ & $\cc{7a}$ & $\cc{7b}$ & $\cc{7c}$ \\ \hline  \hline
$\chi_{7}$ & 152 & $-1$ & 5 & 5 & $-2$ \\
$\chi_{8}$ & 152 & $-1$ & 5 & 5 & $-2$ \\
$\chi_{9}$ & 304 & $-2$ & 10 & $-4$ & 3 \\  \hline
\end{tabular}}
\end{subtable}\qquad
\begin{subtable}[c]{0.25\textwidth} \centering
\subcaption{Part of the $7$-modular Brauer table}{
\begin{tabular}{l c c}  \hline
{ } & $\cc{1a}$  & $\cc{3a}$ \\ \hline  \hline
$\varphi_{13}$ & 117 & 0  \\
$\varphi_{14}$ & 117 & 0 \\  \hline
\end{tabular}}
\end{subtable}\\[.3cm]
\begin{subtable}[c]{0.6\textwidth} \centering
\subcaption{Part of the decomposition matrix with for $p=7$}{
\begin{tabular}{l c c c c c c}  \hline
{ } & $\varphi_{3}$  & $\varphi_{4}$ & $\varphi_6$ & $\varphi_7$ & $\varphi_{13}$ & $\varphi_{14}$ \\ \hline  \hline
$\chi_{7}$ & $\cdot$ & 1 & $\cdot$ & 1 & 1 & $\cdot$  \\
$\chi_{8}$ & 1 & $\cdot$ & 1 & $\cdot$ & $\cdot$ & 1 \\
$\chi_{9}$ & 1 & 1 & 1 & 1 & 1 & 1 \\  \hline
\end{tabular}}
\end{subtable}
\end{table}\\
From the values of the given Brauer characters we deduce 
\begin{align*}
\Theta_{13}(u^7) \sim \Theta_{14}(u^7) \sim \left(\eigbox{39}{1}, \eigbox{39}{\zeta_3,\zeta_3^{-1}}\right).
\end{align*}
Hence $S_{13}^1$ and $S_{14}^1$ are 39-dimensional $k\langle \bar{u} \rangle$-modules. From the partial augmentations of $u$ we obtain the eigenvalues of the ordinary representations to be
\begin{align*}
D_7(u) \sim D_8(u) \sim \left(\eigbox{2}{1},\ \eigbox{8}{\zeta_7,...,\zeta_7^{-1}},\ \eigbox{21}{\zeta_3 ,\zeta_3^{-1}},\ \eigbox{5}{ \zeta_{21},...,\zeta_{21}^{-1}}\right), \\
D_9(u) \sim \left(\eigbox{46}{1},\ \eigbox{9}{\zeta_7,...,\zeta_7^{-1}},\ \eigbox{18}{\zeta_3 ,\zeta_3^{-1}},\ \eigbox{14}{\zeta_{21},...,\zeta_{21}^{-1}} \right).
\end{align*}
For a module $M$ we denote by ${\rm{soc}}(M)$ the socle of $M$, i.e. the maximal semisimple submodule of $M$. Since $S_{13}^1$ is a sub- or factor module of both $\bar{L}_7^1$ and $\bar{L}_8^1$, the eigenvalues imply 
\[{\rm{dim}}({\rm{soc}}(S_{13}^1)) \leq 10\]
and thus
\[ {\rm{dim}}(S_{13}^1/{\rm{soc}}(S_{13}^1)) \geq 29 \ \ {\mathrm{and}} \ \ {\rm{dim}}(S_{14}^1/{\rm{soc}}(S_{14}^1)) \geq 29.\]
Since both $S_{13}^1$ and $S_{14}^1$ are sub- or factor modules of $\bar{L}_9^1$, we conclude that 
\[{\rm{dim}}(\bar{L}_9^1/{\rm{soc}}(\bar{L}_9^1)) \geq 2\cdot 29 =58.\]
On the other hand this dimension is bounded by the number of primitive $7$th roots of unity appearing as eigenvalues of $D_9(u)$, implying  
\[{\rm{dim}}(\bar{L}_9^1/{\rm{soc}}(\bar{L}_9^1)) \leq 9\cdot 6 = 54,\]
a contradiction.

\textit{Case 3:} $\varepsilon_{\cc{7c}}(u^3) = -4,$ so $(\varepsilon_{\cc{7a}}(u^3), \varepsilon_{\cc{7b}}(u^3), \varepsilon_{\cc{7c}}(u^3)) = (5,0,-4).$\\
Arguing as in the previous cases we have $\chi_7(u^3) = 33$ and so 
\[D_7(u^3) \sim \left(\eigbox{50}{1},\ \eigbox{17}{\zeta_7,...,\zeta_7^{-1}} \right).\]
Thus $D_7(u)$ has exactly $152-17\cdot6 = 50$ eigenvalues being primitive $7$th or $21$st roots of unity. So $c_3 \leq 25$ and proceeding as in Case 2 we have $c_3 = 21, c_{21} = 5, c_7=7$ and $c_1 = 8.$ This means
\[-15 = \chi_7(u) = 41 - 7\varepsilon_{\cc{7c}}(u)\]
and so $\varepsilon_{7c}(u) = 8$ and $\varepsilon_{\cc{7b}}(u) = 0.$ Unfortunately we were not able to apply the lattice method successfully in this case. So we conclude that if $u$ exists, it satisfies 
\begin{align*} & (\varepsilon_{\cc{7a}}(u^3), \varepsilon_{\cc{7b}}(u^3), \varepsilon_{\cc{7c}}(u^3)) = (5,0,-4)\\ \text{and} \quad (& \varepsilon_{\cc{3a}}(u), \varepsilon_{\cc{7a}}(u), \varepsilon_{\cc{7b}}(u), \varepsilon_{\cc{7c}}(u)) = (-6,-1,0,8). \end{align*}

Now assume that $\mathrm{V}(\mathbb{Z}\PSL(3,7))$ contains a unit $u$ of order $21$. Then this unit lies in the integral group rings of the three isomorphic but different automrophic degree $2$ extensions of $H = \PSL(3,7)$. Note that the outer automorphism group of $H$ is isomorphic to the symmetric group of degree $3$. There are four conjugacy classes of elements of order $7$ in $H$, say $7\alpha, 7\beta, 7\gamma, 7\delta.$ In each degree $2$ extension the three different pairs from the classes $7\beta, 7\gamma, 7\delta$ join into one conjugacy class while the third class plays the role of $7b$ from the calculations above. This implies 
\[\varepsilon_{7\alpha}(u^3) = 5 \ \ {\rm{and}}  \ \ \varepsilon_{7\beta}(u^3) = \varepsilon_{7\gamma}(u^3) = \varepsilon_{7\delta}(u^3) = 0,\]
contradicting the fact that $u$ is a normalized torsion unit.
\end{proof}

\section*{Acknowledgements}
We are very thankful to Markus Schmidmeier for providing us with the link between Littlewood-Richardson coefficients and the structure of finite $\mathfrak{o}$-modules.

\bibliographystyle{amsalpha}      % mathematics and physical sciences
\bibliography{GitterTeil}   % name your BibTeX data base

\end{document}